\documentclass[11pt]{amsart}

\usepackage{graphicx}
\usepackage{amsthm,amsmath,amssymb,tikz,bm}
\usepackage{mathtools}
\usepackage{xifthen}
\usepackage{comment}
\usepackage[margin=4.5cm]{geometry}
\usepackage[shortlabels]{enumitem}
\usepackage[colorlinks=true,
linkcolor=blue,citecolor=blue,
urlcolor=blue]{hyperref}

\newtheorem{theorem}{Theorem}[section]
\newtheorem{lemma}[theorem]{Lemma}
\newtheorem{sublemma}{}[theorem]
\newtheorem{corollary}[theorem]{Corollary}

\newtheorem{proposition}[theorem]{Proposition}

\newcommand{\F}{\mathbb F}
\newcommand{\GF}{\operatorname{GF}}
\newcommand{\PG}{\operatorname{PG}}
\newcommand{\AG}{\operatorname{AG}}
\newcommand{\BB}{\operatorname{BB}}

\newcommand{\spn}{\operatorname{span}}
\newcommand{\exq}{\operatorname{ex}_q}
\newcommand{\qexp}{\operatorname{Exp}_q}

\title[Matroidal K\H{o}vari--S\'os--Tur\'an theorem]{The K\H{o}vari--S\'os--Tur\'an theorem for $\GF(q)$-representable matroids}

\author{Wayne Ge}
\address{Mathematics Department \\
Louisiana State University \\ Baton Rouge, LA}
\email{yge4@lsu.edu}

\date{\today}
\subjclass[2020]{05B35, 05C35, 05D05}
\keywords{Tur\'an number, representable matroids, complete bipartite graphs}

\begin{document}

\begin{abstract}
In this paper, we establish an analogue of the K\H{o}vari--S\'os--Tur\'an Theorem for $\GF(q)$-representable matroids.  For $2\leq s\leq t$, we show that if $M$ is a rank-$n$ simple $\GF(q)$-representable matroid having no $M(K_{s,t})$-restriction, then
\[
    |E(M)|=O_{q,s,t}\bigl(q^{(1-1/s)n}\bigr).
\]
In particular, we prove that the maximum number of elements in a simple rank-$n$ binary matroid with no $M(K_{2,t})$-restriction is $\Theta_{t}(2^{n/2})$ where the lower bound is obtained using binary Sidon sets.
\end{abstract}

\maketitle

\section{Introduction}

The {\it extremal number}, or {\it Tur\'an number}, $\operatorname{ex}(H;n)$ of a simple graph $H$ is the maximum number of edges in a simple $n$-vertex graph containing no subgraph isomorphic to $H$. The following is the celebrated theorem of Tur\'an~\cite{Turan41}.

\begin{theorem}[Tur\'an's Theorem]
Let $k\geq 2$. If $G$ is an $n$-vertex simple graph containing no subgraph isomorphic to $K_k$, then
\[
    |E(G)|\leq\left(\frac{k-2}{k-1}\right)\frac{n^2}{2}.
\]
Moreover, equality holds if and only if $(k-1)\mid n$ and $G$ is the complete $(k-1)$-partite graph whose parts all have size $n/(k-1)$.
\end{theorem}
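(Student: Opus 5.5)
We prove Tur\'an's Theorem by induction on $n$, with $k\geq 2$ fixed. The case $k=2$ is trivial: a $K_2$-free graph has no edges, the bound is $0$, and equality holds exactly for the edgeless graph, which is the complete $1$-partite graph. For $k\geq 3$, a more transparent route is Zykov symmetrization. It gives the bound together with the structure of the extremal graphs, and it avoids the integrality issues that arise when $(k-1)\nmid n$.

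The plan has three steps.

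\textbf{Reduction to a maximum graph.} Among all $K_k$-free graphs on $n$ vertices, fix one, $G$, with the maximum number of edges.

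\textbf{Non-adjacency is an equivalence relation.} We show that in $G$, being non-adjacent (or equal) is transitive. Suppose not. Then there are vertices $u,v,w$ with $uv,vw\notin E(G)$ but $uw\in E(G)$.
\begin{itemize}
\item If $d(v)<d(u)$, delete $v$ and add a clone $u'$ of $u$, adjacent to exactly $N(u)$. Any clique containing $u'$ avoids $u$, so the new graph is still $K_k$-free. It has more edges, a contradiction. The case $d(v)<d(w)$ is symmetric.
\item If $d(v)\geq d(u)$ and $d(v)\geq d(w)$, delete $u$ and $w$ and add two clones of $v$. Since $uw$ was an edge, the edge count changes by $2d(v)-(d(u)+d(w)-1)\geq 1$. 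The new graph is $K_k$-free, because the clones of $v$ are pairwise non-adjacent and so at most one of them lies in any clique. Again this contradicts maximality.
\end{itemize}
Hence $G$ is complete multipartite. Since $G$ is $K_k$-free, it has at most $k-1$ parts.

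\textbf{Optimizing the part sizes.} A complete multipartite graph with parts of sizes $a_1,\dots,a_{k-1}$ (allowing $a_i=0$) has
\[
\binom{n}{2}-\sum_i\binom{a_i}{2}
\]
edges. By convexity this count is maximized when the $a_i$ are as equal as possible. Cauchy--Schwarz gives $\sum a_i^2\geq n^2/(k-1)$, so
\[
|E(G)|\leq \frac{n^2}{2}-\frac{n^2}{2(k-1)}=\left(\frac{k-2}{k-1}\right)\frac{n^2}{2},
\]
with equality if and only if every $a_i=n/(k-1)$, which forces $(k-1)\mid n$.

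\textbf{The equality characterization.} This is the main obstacle. The symmetrization argument only shows that \emph{some} maximum graph is complete multipartite. To conclude that every $G$ attaining the bound is the balanced Tur\'an graph, we use that the bound attained by $G$ is then the maximum, so $G$ itself is a maximum graph. The transitivity argument applies to every maximum graph, not only to one produced by symmetrization: each case strictly increases the edge count, so a violation of transitivity in any maximum $K_k$-free graph already yields a contradiction. Thus any extremal $G$ is complete $(\leq k-1)$-partite, and the equality case of Cauchy--Schwarz forces exactly $k-1$ parts, each of size $n/(k-1)$. Conversely, the balanced complete $(k-1)$-partite graph is $K_k$-free, since any clique meets each part at most once, and it attains the bound.

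The care needed is in the first bullet. When $d(v)<d(u)$, we must check that no edge is counted twice. This holds because $u$ and $v$ are non-adjacent, so removing $v$ loses exactly $d(v)$ edges, while $u'$ gains $d(u)$ edges; note that $v\notin N(u)$, so $v$'s deletion does not reduce the clone's degree.
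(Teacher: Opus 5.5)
The paper gives no proof of this statement. Tur\'an's Theorem appears only as cited background in the introduction, so there is no argument in the paper to compare yours with.

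Your Zykov symmetrization proof is correct and complete. Both cloning moves keep $n$ vertices and strictly increase the edge count. Your computations $d(u)-d(v)\geq 1$ and $2d(v)-(d(u)+d(w)-1)\geq 1$ are right, and they use exactly the non-adjacencies $uv,vw\notin E(G)$. Both moves also preserve $K_k$-freeness. A clique meets a set of pairwise non-adjacent vertices with identical neighbourhoods at most once, and it can then be pulled back to a clique of $G$ of the same size. In the second move that set is $v$ together with its two clones, not only the clones, and this is worth saying explicitly. For a complete multipartite graph the edge count is $\bigl(n^2-\sum_i a_i^2\bigr)/2$, so Cauchy--Schwarz gives the bound and its equality case at once. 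Your observation that any graph attaining the bound is itself a maximum $K_k$-free graph, so the transitivity argument applies to it verbatim, correctly settles the characterization of equality.

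There are two cosmetic points. First, your opening sentence announces an induction on $n$ that never takes place. Second, the remark that the count is maximized when the parts are as equal as possible is not needed for the real-valued bound you are proving.

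Compare this with the standard inductive proof, which deletes a copy of $K_{k-1}$ from a maximum graph and applies induction to the remaining $n-k+1$ vertices. Symmetrization gives you the structure of the extremal graphs directly. The inductive route proves the inequality quickly but needs extra bookkeeping to extract the equality case.
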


Tur\'an-type problems have been a central topic in extremal graph theory.  For general graphs, Erd\H{o}s and Stone~\cite{Erdos_Stone} proved the following theorem.  We write $\chi(G)$ for the chromatic number of a graph $G$.

\begin{theorem}[Erd\H{o}s--Stone Theorem]
For every graph $G$ with at least one edge,
\[
    \operatorname{ex}(G;n)
      =\left(\frac{\chi(G)-2}{\chi(G)-1}+o(1)\right)\binom n2.
\]
\end{theorem}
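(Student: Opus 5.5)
The statement to prove is the Erd\H{o}s--Stone Theorem, and I plan to derive it from the Kővári--Sós--Turán bound together with the Szemerédi regularity lemma. Set $r=\chi(G)-1$; we may assume $r\ge 1$.

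\textbf{Lower bound.} I will take the Tur\'an graph, the complete balanced $r$-partite graph on $n$ vertices. It has $\bigl(\frac{r-1}{r}+o(1)\bigr)\binom n2$ edges. It cannot contain $G$, since any subgraph of an $r$-partite graph is properly $r$-colourable, while $\chi(G)=r+1$.

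\textbf{Reduction for the upper bound.} Let $v=|V(G)|$. Then $G$ is a subgraph of the complete $(r+1)$-partite graph $K_{r+1}(v)$ with parts of size $v$, because a proper $(r+1)$-colouring of $G$ embeds it there. So it suffices to show the following: for every $\varepsilon>0$ and every $t$, an $n$-vertex graph with at least $\bigl(\frac{r-1}{r}+\varepsilon\bigr)\frac{n^2}{2}$ edges contains $K_{r+1}(t)$ once $n$ is large.

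\textbf{Regularity step.} I apply the regularity lemma with a parameter $\delta\ll\varepsilon$ and a lower bound on the number of parts. Next I clean the graph by deleting edges inside parts, edges between irregular pairs, and edges between pairs of density below $\varepsilon/2$. This removes at most $(\varepsilon/2)n^2/2$ edges. The reduced graph $R$ on $k$ clusters then has more than $\frac{r-1}{r}\frac{k^2}{2}$ edges. By Tur\'an's Theorem, $R$ contains a $K_{r+1}$.

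\textbf{Counting step (main obstacle).} We now have $r+1$ clusters, pairwise $\delta$-regular with density at least $\varepsilon/2$. I must embed $K_{r+1}(t)$ into them, and I do this with the standard counting/embedding lemma, by induction on the number of clusters. In cluster $V_1$, all but at most $r\delta|V_1|$ vertices have at least $(\varepsilon/2-\delta)|V_j|$ neighbours in each other cluster $V_j$. Any $t$-set $S$ of such typical vertices has a common neighbourhood in each $V_j$ of size at least $(\varepsilon/2-\delta)^t|V_j|$. This holds by repeatedly applying regularity, provided $\delta\le(\varepsilon/2-\delta)^t$. Those common neighbourhoods again form regular pairs with slightly worse parameters, so induction on $r$ finishes the argument.

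The delicate part is choosing $\delta$ relative to $\varepsilon$, $t$ and $r$ so that the induction survives $r$ rounds. I will fix $\delta$ sufficiently small in terms of $(\varepsilon,t,r)$ at the very start; this is legitimate because $n$ can be taken large relative to the resulting bound on the number of clusters.
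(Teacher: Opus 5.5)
The paper gives no proof of this statement: it quotes the Erd\H{o}s--Stone Theorem from Erd\H{o}s and Stone as background. So your argument has to stand on its own. Most of your outline is the standard regularity-lemma proof and is fine. The K\H{o}vari--S\'os--Tur\'an bound you announce is never actually used. The cleaning loss is slightly larger than you state: once you also delete edges meeting the exceptional set, edges inside clusters, and edges in irregular pairs, it is at most $\frac{\varepsilon}{2}\cdot\frac{n^2}{2}+\frac{n^2}{2k}+2\delta n^2$ rather than $\frac{\varepsilon}{2}\cdot\frac{n^2}{2}$. This is still below $\varepsilon\frac{n^2}{2}$ for $k$ large and $\delta$ small, so Tur\'an's Theorem still applies to $R$. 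However, the counting step, which you yourself flag as the main obstacle, rests on a false claim.

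The sentence ``any $t$-set $S$ of such typical vertices has a common neighbourhood in each $V_j$ of size at least $(\varepsilon/2-\delta)^t|V_j|$'' is false. Regularity only constrains densities between sets of at least $\delta|V_1|$ and $\delta|V_j|$ vertices, so it says nothing about the common neighbourhood of a fixed bounded set of vertices. For instance, start from a $\delta$-regular pair of density $1/3$. Rewire the edges at a single vertex $b$ so that $N(b)\cap V_j$ is disjoint from $N(a)\cap V_j$ for another vertex $a$, keeping both degrees near $|V_j|/3$. This changes the density of any pair $(X,Y)$ with $|X|\geq\delta|V_1|$ by at most $1/(\delta|V_1|)$, so for $|V_1|$ large the pair stays, say, $2\delta$-regular. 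Yet $\{a,b\}$ is a set of typical vertices with no common neighbour at all.

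What your phrase ``repeatedly applying regularity'' must mean is that the $t$ vertices are \emph{chosen} greedily:
\begin{enumerate}
\item Having chosen $x_1,\dots,x_i\in V_1$, put $Y_j^{(i)}=V_j\cap N(x_1)\cap\cdots\cap N(x_i)$. By construction it has size at least $(\varepsilon/2-\delta)^i|V_j|\geq\delta|V_j|$, using your condition $\delta\leq(\varepsilon/2-\delta)^t$.
\item Applying the $\delta$-regularity of $(V_1,V_j)$ with $Y=Y_j^{(i)}$ shows that fewer than $\delta|V_1|$ vertices of $V_1$ have fewer than $(\varepsilon/2-\delta)|Y_j^{(i)}|$ neighbours in $Y_j^{(i)}$.
\item Take $x_{i+1}$ outside the union of these $r$ bad sets and outside $\{x_1,\dots,x_i\}$.
\item After $t$ steps, the slicing lemma shows that $Y_2^{(t)},\dots,Y_{r+1}^{(t)}$ are pairwise $\max\{2\delta,\delta/(\varepsilon/2-\delta)^t\}$-regular with density at least $\varepsilon/2-\delta$.
\end{enumerate}
Your induction on $r$ then goes through, with $\delta$ fixed small enough in terms of $\varepsilon$, $t$ and $r$. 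With the counting step replaced by this sequential selection, the proof is complete.
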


The Erd\H{o}s--Stone Theorem gives an asymptotic value of $\operatorname{ex}(G;n)$ for every non-bipartite graph $G$.  For bipartite graphs, however, it only gives $\operatorname{ex}(G;n)=o(n^2)$, and much of the subsequent work on Tur\'an-type problems has focused on forbidding bipartite graphs.  One of the most famous results in this direction is the following upper bound of K\H{o}vari, S\'os, and Tur\'an~\cite{KST}.

\begin{theorem}[K\H{o}vari--S\'os--Tur\'an Theorem]
For positive integers $s$ and $t$ with $s\leq t$,
\[
    \operatorname{ex}(K_{s,t};n)=O_{s,t}(n^{2-1/s}).
\]
\end{theorem}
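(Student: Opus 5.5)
The plan is the classical double-counting argument, applied to the number of ``stars'' $K_{1,s}$ in an $n$-vertex graph $G$ with no $K_{s,t}$-subgraph. Write $m=|E(G)|$ and let $d(v)$ denote the degree of a vertex $v$. I will count pairs $(v,S)$ in which $S$ is an $s$-subset of the neighbourhood $N(v)$.

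First, bound the count from above. For each fixed $s$-set $S$ of vertices, at most $t-1$ vertices are adjacent to every vertex of $S$. Otherwise $S$ together with $t$ common neighbours would span a copy of $K_{s,t}$; note that a common neighbour cannot lie in $S$, since $G$ has no loops. So the number of pairs is at most $(t-1)\binom{n}{s}$.

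Second, bound the same count from below. The count equals $\sum_v \binom{d(v)}{s}$. I will use the convex extension $f(x)=\binom{x}{s}$ for $x\ge s-1$ and $f(x)=0$ otherwise. This function is convex on $\mathbb{R}$ and agrees with the binomial coefficient at every nonnegative integer. Jensen's inequality then gives
\[
\sum_v \binom{d(v)}{s}\ \ge\ n\,f\!\left(\frac{2m}{n}\right).
\]

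Third, combine the two bounds. Assume $2m/n\ge 2s$; otherwise $m\le sn=O_{s,t}(n^{2-1/s})$ and we are done. Under this assumption $f(2m/n)\ge (2m/n-s)^s/s!\ge (m/n)^s/s!$. Together with $\binom{n}{s}\le n^s/s!$ this yields
\[
\frac{n\,(m/n)^s}{s!}\ \le\ \frac{(t-1)\,n^s}{s!}.
\]
Rearranging gives $m^s\le (t-1)n^{2s-1}$, that is, $m\le (t-1)^{1/s}n^{2-1/s}$. Adding the trivial case, $m\le (t-1)^{1/s}n^{2-1/s}+sn$, which is $O_{s,t}(n^{2-1/s})$.

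The only real subtlety is the convexity step. The function $x\mapsto\binom{x}{s}$ is not convex for small $x$, so one must either use the truncated extension $f$ above or separately discard vertices of degree below $s$. Discarding them costs at most $sn$ edges, which is the same as the trivial case. Once this is handled, the rest is routine algebra.
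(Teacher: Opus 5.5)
Your argument is correct and complete. It is the classical double count of pairs ($s$-set, common neighbour), and each estimate checks out. This includes the convexity of the truncated extension $f$, which you assert without proof but which follows from Rolle's theorem applied to $\prod_{i=0}^{s-1}(x-i)$.

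The paper does not prove this theorem itself; it quotes it from \cite{KST} as background. Its proof of Theorem~\ref{thm:detailed-upper}, however, transcribes exactly your scheme, and the comparison is worth noting. There the pairs counted are $(X,y)$, with $X$ an affinely independent ordered $s$-tuple and $y\in N_B(X)$.

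Your bound ``at most $t-1$ common neighbours'' becomes $|N_B(X)|\le q^{s+t-2}$ (Corollary~\ref{cor:common-bound}). This is where the real matroidal work lies, in Lemmas~\ref{lem:grid} and~\ref{lem:quotient-neighborhood}. A large common neighbourhood only forces $M(K_{s,t})$ once it contains $t$ vectors independent modulo $W_X$, so one bounds the dimension of its image in $\F^n/W_X$ rather than its size.

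The lower-bound side, by contrast, is easier than yours. The setting is translation invariant: the ``neighbourhood'' of every $y$ is $B-y$, of size exactly $|B|$. So the count equals $q^n|\mathcal I|$ exactly, and no Jensen step or convex extension is needed.

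The analogue of choosing $s$ distinct neighbours is affine independence. The paper pays for it by discarding at most $d_{q,s}|B|^{s-1}$ affinely dependent tuples (Lemma~\ref{lem:dependent-tuples}). Its case split $|B|>2d_{q,s}$ plays precisely the role of your assumption $2m/n\ge 2s$: both ensure the correction term costs at most a factor of two.
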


The main result of this paper is an analogue of the K\H{o}vari--S\'os--Tur\'an Theorem for simple $\GF(q)$-representable matroids.  Let $N$ be a nonempty simple $\GF(q)$-representable matroid.  The {\it extremal number}, or {\it Tur\'an number}, $\exq(N;n)$ is the maximum number of elements in a rank-$n$ simple $\GF(q)$-representable matroid with no $N$-restriction.  Before stating our result, we recall some Tur\'an-type results for representable matroids.  Our matroid notation follows~\cite{Oxley}.

For $\GF(q)$-representable matroids, projective geometries play a role analogous to that of complete graphs.  For $n\geq c\geq0$, the {\it Bose--Burton geometry}, denoted by $\BB_q(n,c)$, is obtained from $\PG(n-1,q)$ by deleting all points in a flat of rank $n-c$.  The following result of Bose and Burton~\cite{BB} is an analogue of Tur\'an's Theorem for projective geometries.

\begin{theorem}[Bose--Burton Theorem]
For $n\geq m\geq1$, if $M$ is a rank-$n$ simple $\GF(q)$-representable matroid with no $\PG(m-1,q)$-restriction, then
\[
    |E(M)|\leq\frac{q^n-q^{n-m+1}}{q-1}.
\]
Moreover, equality holds if and only if $M\cong\BB_q(n,m-1)$.
\end{theorem}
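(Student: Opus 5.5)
The approach is induction on $m$, using the correspondence between hyperplanes of $\PG(n-1,q)$ and rank-$(n-1)$ flats. We view $M$ as a set $E\subseteq \PG(n-1,q)$ and write $C=\PG(n-1,q)\setminus E$ for its complement. The upper bound $|E|\le (q^n-q^{n-m+1})/(q-1)$ is then equivalent to
\[
|C|\ge \frac{q^{n-m+1}-1}{q-1},
\]
the number of points of $\PG(n-m,q)$. So the statement becomes: if $C$ meets every rank-$m$ flat of $\PG(n-1,q)$, then $|C|\ge|\PG(n-m,q)|$, with equality only when $C$ is a rank-$(n-m+1)$ flat. This is a blocking-set statement. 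Note that $\BB_q(n,m-1)$ is exactly the complement of a rank-$(n-m+1)$ flat, and such a flat meets every rank-$m$ flat by the dimension formula, since $(n-m+1)+m>n$.

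\textbf{Base and induction.} For $m=1$ the statement is trivial: $C$ must contain every point, so $|C|=|\PG(n-1,q)|$. For $m\ge2$, first suppose $C$ contains a whole hyperplane $H$. Then
\[
|C|\ge \frac{q^{n-1}-1}{q-1}\ge \frac{q^{n-m+1}-1}{q-1},
\]
and the inequality is strict unless $m=2$, in which case $C=H$ is a flat of the required rank. Otherwise, pick a point $p\notin C$ and project from $p$ onto a hyperplane, which identifies the result with $\PG(n-2,q)$. Rank-$m$ flats through $p$ correspond to rank-$(m-1)$ flats of the quotient, so the image $C'$ of $C$ meets every rank-$(m-1)$ flat of $\PG(n-2,q)$. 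By induction,
\[
|C|\ge|C'|\ge \frac{q^{(n-1)-(m-1)+1}-1}{q-1}=\frac{q^{n-m+1}-1}{q-1},
\]
which is exactly the bound.

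\textbf{Equality case.} If $|C|$ equals this bound, then for \emph{every} $p\notin C$ two things hold. First, projection from $p$ is injective on $C$, so no line through $p$ contains two points of $C$. Second, by induction the image $C'$ is a flat. I expect this to be the main obstacle: turning these facts into the conclusion that $C$ itself is a flat.

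The plan is to show that $C$ is closed under taking lines. Take $x,y\in C$ and suppose some point $p$ on the line $xy$ is not in $C$. Then the line through $p$ contains the two points $x,y$ of $C$, contradicting injectivity of projection from $p$. Hence every line meeting $C$ in two points lies entirely in $C$, so $C$ is a subspace, hence a flat. Its size then forces its rank to be $n-m+1$.

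Finally, the case where there is no $p\notin C$ means $C$ is everything, which is impossible for $m\ge2$ by the size bound. This gives $M\cong\BB_q(n,m-1)$.
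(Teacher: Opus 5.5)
The paper does not prove this theorem. It quotes it from Bose and Burton~\cite{BB} as background, so there is no proof in the paper to compare with, and your argument has to stand on its own. It does stand: it is a correct, self-contained proof.

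The reduction is right. If $C=\PG(n-1,q)\setminus E$ missed some rank-$m$ flat, that flat would be a $\PG(m-1,q)$-restriction of $M$. Projecting from a point $p\notin C$ sends each rank-$m$ flat through $p$ to a rank-$(m-1)$ flat of $\PG(n-2,q)$, and the image $C'$ must meet it. So the inequality follows by induction on $m$, with the statement quantified over all $n\geq m$.

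The equality case is also correct, and the step you flag as the main obstacle is already settled by your next paragraph. Equality forces projection from every $p\notin C$ to be injective on $C$. Hence $C$ is closed under lines, so it is a flat. It is nonempty because the bound $(q^{n-m+1}-1)/(q-1)$ is at least $1$, and its size fixes its rank as $n-m+1$.

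Two simplifications are available. First, the separate case where $C$ contains a hyperplane is unnecessary. The projection argument works for any $p\notin C$, and such a $p$ exists unless $C$ is the whole point set, which satisfies the bound strictly when $m\geq 2$. Second, you never use the inductive equality statement (that $C'$ is a flat). The induction only needs to carry the inequality, and the characterization of equality then comes directly from injectivity.

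One small point is worth adding if you want to confirm that the bound is attained, i.e.\ that $\BB_q(n,m-1)$ really has no $\PG(m-1,q)$-restriction. A restriction of $\PG(n-1,q)$ isomorphic to $\PG(m-1,q)$ has rank $m$ and $(q^m-1)/(q-1)$ points. It therefore equals the rank-$m$ flat it spans. Your dimension count then shows that the complement of a rank-$(n-m+1)$ flat contains no such flat.
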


Geelen and Nelson~\cite{GeelenNelson} proved a matroidal analogue of the Erd\H{o}s--Stone Theorem.  For a simple rank-$r$ $\GF(q)$-representable matroid $N$, viewed as a restriction of $\PG(r-1,q)$, the {\it critical exponent} $c(N;q)$ is $r-r(F)$, where $F$ is a maximum-rank flat of $\PG(r-1,q)$ that is disjoint from $E(N)$.

\begin{theorem}[Matroidal Erd\H{o}s--Stone Theorem]
Let $q$ be a prime power, and let $N$ be a nonempty simple $\GF(q)$-representable matroid. Then
\[
    \exq(N;n)
      =\left(1-q^{1-c(N;q)}+o(1)\right)\frac{q^n-1}{q-1}.
\]
\end{theorem}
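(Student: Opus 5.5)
The proof has two halves: a lower bound from the Bose--Burton geometries, and a matching upper bound by induction on the critical exponent $c=c(N;q)$, with the density Hales--Jewett theorem supplying the affine base case. Write $r=r(N)$ and regard $N$ as a restriction of $\PG(r-1,q)$. By the definition of $c$, some flat of rank $r-c$ misses $E(N)$, so $N$ is a restriction of $\BB_q(r,c)$. On the other hand, no flat of rank $r-c+1$ misses $E(N)$.

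\textbf{Lower bound.} I claim that $\BB_q(n,c-1)$ has no $N$-restriction. Suppose $N$ were a restriction of it, and let $S=\spn(E(N))$ inside $\PG(n-1,q)$, a flat of rank $r$. Let $F$ be the deleted flat, which has rank $n-c+1$. Then
\[
r(F\cap S)\geq r+(n-c+1)-n=r-c+1,
\]
and $F\cap S$ is a flat of $S\cong\PG(r-1,q)$ disjoint from $E(N)$. This contradicts the definition of $c$. Hence $\exq(N;n)\geq (q^n-q^{n-c+1})/(q-1)$, which equals $(1-q^{1-c}+o(1))\frac{q^n-1}{q-1}$.

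\textbf{Upper bound.} Since $N$ is a restriction of $\BB_q(r,c)$, it suffices to prove the following. For each $\varepsilon>0$, each $m$ and each $c\geq1$, if $n$ is large and $M$ is a simple rank-$n$ $\GF(q)$-representable matroid with
\[
|E(M)|\geq(1-q^{1-c}+\varepsilon)\frac{q^n-1}{q-1},
\]
then $M$ has a $\BB_q(m,c)$-restriction. I prove this by induction on $c$.

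For $c=1$, $\BB_q(m,1)=\AG(m-1,q)$, and the hypothesis is only that $M$ has density at least $\varepsilon$. The key input is the density Hales--Jewett theorem of Furstenberg--Katznelson, in its affine-space form: a subset of density $\varepsilon$ in $\GF(q)^{n-1}$ (for $n$ large) contains an affine subspace of any fixed dimension $m-1$. Choose a hyperplane $H$ of $\PG(n-1,q)$ for which $E(M)\setminus H$ has density at least about $\varepsilon$ in the affine space $\PG(n-1,q)\setminus H$; averaging over hyperplanes shows such an $H$ exists. The affine subspace found there is an $\AG(m-1,q)$-restriction of $M$.

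For the inductive step, let $M$ have density at least $1-q^{1-c}+\varepsilon$. First I use a density-increment argument. Averaging shows that the densities of $M$ in hyperplanes average to essentially the density of $M$. So either $M$ has a restriction to a flat of large rank with density at least $1-q^{1-c}+\varepsilon+\delta$, in which case I pass to that flat and repeat (this can happen only boundedly often), or $M$ is ``uniform'' on a large flat $P$ of rank $n'$, meaning every large subflat has density close to that of $P$.

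In the uniform case, fix a flat $F\subseteq P$ of rank $n'-1$. Then $M|F$ has density greater than $1-q^{2-c}$. Indeed, $1-q^{1-c}$ is exactly the density of $\BB(n,c)$, which is split between a hyperplane of density $1-q^{2-c}$ and the full affine complement. By induction, $M|F$ contains a large $\BB_q(m',c-1)$ sitting in a flat $F'\subseteq F$. It then remains to find a point $x\notin F$ with the following property: the affine copy of $\spn(F'\cup\{x\})\setminus F'$ is almost entirely contained in $E(M)$, at least on a large subflat.

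\textbf{Main obstacle.} The hardest step is combining the $\BB_q(m',c-1)$ found inside the hyperplane with a full affine part outside it. The ``almost entirely contained'' condition must become ``entirely contained'' on a subflat of rank $m$, and the chosen subflat must still meet $F'$ in a $\BB_q(m-1,c-1)$. My first attempt is a Ramsey-type refinement on this configuration. Colour each coset of the affine complement according to which points lie in $E(M)$, then apply density Hales--Jewett, or its multidimensional coloured version, simultaneously with the inductive structure. The uniformity from the density-increment step guarantees that the affine complement has density close to $1$. This is what should make the simultaneous application feasible.
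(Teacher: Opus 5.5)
The paper does not prove this theorem; it quotes it from Geelen and Nelson~\cite{GeelenNelson}, so your argument has to stand on its own. Your lower bound from $\BB_q(n,c-1)$, the reduction to finding $\BB_q(m,c)$, and the base case $c=1$ via density Hales--Jewett are fine. The inductive step, however, has a genuine gap, and the one concrete claim you offer to close it is false.

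Uniformity does \emph{not} make the affine complement nearly full. Suppose every hyperplane $H$ of $P$ has density close to the density $d$ of $P$. Since $d\approx\frac1q d_H+(1-\frac1q)d_{P\setminus H}$, every affine complement then also has density about $d$. And $d$ may be as small as $1-q^{1-c}+\varepsilon$: a random subset of that density is uniform on all flats of rank $\gg\log n$. So for $c\ge2$ there is in general no $x$ whose affine part over $F'$ is almost entirely in $E(M)$.

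What you actually need is a rank-$m$ flat whose affine part lies entirely in $E(M)$ \emph{and} whose flat at infinity meets the deleted flat of your $\BB_q(m',c-1)$ in codimension exactly $c-1$. Density Hales--Jewett applied to the affine part gives no control over where the direction space lands; it may sit inside the deleted flat. The ``simultaneous application'' you describe is left unspecified. Fixing the $\BB_q(m',c-1)$ first and then trying to extend it is the wrong order of quantifiers.

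The missing idea is to find all $c$ layers at once.

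\textbf{Supersaturation.} Choose $r\ge c$ with $\frac{\varepsilon}{2}(q^r-1)>1$. A random rank-$r$ flat meets $E(M)$ in density at least $1-q^{1-c}+\varepsilon/2$ with probability at least $\varepsilon/2$. For such a flat, the Bose--Burton Theorem gives a rank-$c$ flat inside $E(M)$. Double counting then shows that a proportion $\delta(q,c,\varepsilon)>0$ of all $n\times c$ matrices $\Phi$ over $\F$ satisfy $\Phi a\in\qexp(E(M))$ for every nonzero $a\in\F^c$.

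\textbf{Density Hales--Jewett over a larger alphabet.} Regard $\Phi$ as a word of length $n$ over the alphabet $\F^c$, the letters being its rows, and apply density Hales--Jewett with $q^c$ letters. A $d$-dimensional combinatorial subspace has the form $\Phi_0+\operatorname{Hom}(\F^c,U)$. Here $U=\spn\{e_{S_1},\dots,e_{S_d}\}$, and $e_S$ is the indicator vector of $S$.

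\textbf{Reading off the restriction.} Every vector $\Phi_0a+u$ with $a\neq0$ and $u\in U$ equals $(\Phi_0+\psi)a$ for some linear $\psi\colon\F^c\to U$. Hence $\spn(\Phi_0(\F^c)\cup U)\setminus U\subseteq\qexp(E(M))$. Since all these vectors are nonzero, $\Phi_0$ is injective and its image meets $U$ trivially. This gives a $\BB_q(c+d,c)$-restriction, with no density increment and no induction on $c$.
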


In the binary setting, Liu, Luo, Nelson, and Nomoto~\cite{LiuLuoNelsonNomoto} proved a stability refinement of the matroidal Erd\H{o}s--Stone Theorem. Like the Erd\H{o}s--Stone Theorem, the matroidal Erd\H{o}s--Stone Theorem is less informative when $c(N;q)=1$, that is, when $N$ is affine.  Although there has been extensive work on the bipartite Tur\'an problem in graph theory, the corresponding sparse problem for matroids has received very little attention.  To our knowledge, the only prior result directly addressing extremal numbers for excluded affine restrictions is the following theorem of Bonin and Qin~\cite[Lemma~21]{Bonin_Qin}.

\begin{theorem}
For $n\geq m\geq3$,
\[
    \operatorname{ex}_2(\AG(m-1,2);n)<2^{t_mn+1},
\]
where $t_m=1-2^{-(m-2)}$.
\end{theorem}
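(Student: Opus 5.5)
The plan is induction on $m$, using the fact that in the binary setting an affine geometry $\AG(m-1,2)$ is a union of two translates of an $\AG(m-2,2)$. Throughout, $E=E(M)$ is a set of nonzero vectors in $V=\GF(2)^n$. A copy of $\AG(k-1,2)$ in $E$ is a coset $u+W$ contained in $E$, where $W$ is a $k$-dimensional subspace and $u\notin W$. The exponents are designed so that $t_m=(1+t_{m-1})/2$, and the inductive step will produce exactly this averaging.

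\emph{Base case $m=3$.} Here $t_3=1/2$, and $\AG(2,2)$ is a set of four distinct points $a,b,c,d$ with $a+b+c+d=0$. If $M$ has no $\AG(2,2)$-restriction, the sums $a+b$ over unordered pairs of distinct elements of $E$ are pairwise distinct and nonzero. Indeed, if $a+b=c+d$ with $\{a,b\}\neq\{c,d\}$, then the four points are distinct and form an affine plane. Hence $\binom{|E|}{2}\le 2^n-1$, which gives $|E|<\sqrt2\cdot 2^{n/2}+1<2^{n/2+1}$.

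\emph{Inductive step ($m\ge4$).} For each nonzero $v\in V$, let $E_v=\{e\in E: e+v\in E\}$. Each ordered pair of distinct elements $(e,f)$ of $E$ is counted exactly once, namely for $v=e+f$. So $\sum_{v\neq0}|E_v|=|E|(|E|-1)$, and some $v$ satisfies $|E_v|\ge |E|(|E|-1)/(2^n-1)$.

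Project $E_v$ into $V/\langle v\rangle$, which has dimension $n-1$. The set $E_v$ is a union of pairs $\{e,e+v\}$, so its image is a set of $|E_v|/2$ points. No image is $0$, since $e=v$ would force $e+v=0\in E$. The image therefore spans a simple binary matroid of rank at most $n-1$.

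Next I check that a copy of $\AG(m-2,2)$ in the image lifts to a copy of $\AG(m-1,2)$ in $E$. Suppose the image contains such a copy $u'+W'$. Its full preimage is $u+W$, where $W$ is the preimage of $W'$; thus $W\ni v$, $\dim W=m-1$ and $u\notin W$. This preimage lies in $E_v\subseteq E$, giving an $\AG(m-1,2)$-restriction of $M$, which is a contradiction. Hence, by induction applied in rank $n-1\ge m-1$ (using monotonicity of the bound in the rank if the image has smaller rank), $|E_v|/2<2^{t_{m-1}(n-1)+1}$.

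Combining the two bounds on $|E_v|$ gives
\[
|E|(|E|-1)<2^n\cdot 2^{t_{m-1}(n-1)+2}=2^{2t_mn+2-t_{m-1}}\le 2^{2t_mn+3/2},
\]
since $t_{m-1}\ge 1/2$.

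The main obstacle is only this constant bookkeeping. Suppose, for a contradiction, that $|E|\ge X=2^{t_mn+1}$. Since $n\ge m\ge4$, we have $t_mn\ge 3$, so $X\ge16>1/(1-2^{-1/2})$. Then
\[
|E|(|E|-1)\ge X(X-1)\ge X^2/\sqrt2=2^{2t_mn+3/2},
\]
which contradicts the previous display. Hence $|E|<2^{t_mn+1}$, completing the induction.
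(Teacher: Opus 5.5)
The paper gives no proof of this statement. It is quoted from Bonin and Qin~\cite[Lemma~21]{Bonin_Qin} as background, so there is no in-paper argument to compare yours against. Judged on its own, your proof is correct.

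The base case is the pair-sum (binary Sidon) count. Indeed $\AG(2,2)\cong U_{3,4}\cong M(K_{2,2})$, so this is exactly the situation of Proposition~\ref{prop:sidon-equivalence}, and it gives $2^{n/2+1}$ for every $n\ge 3$. The inductive step also holds up:
\begin{itemize}
\item $E_v$ is a union of full fibres $\{e,e+v\}$ of the projection from $v$.
\item Its image is a set of distinct nonzero vectors in $V/\langle v\rangle$.
\item The preimage of a coset of an $(m-2)$-dimensional subspace is a coset of an $(m-1)$-dimensional subspace lying inside $E_v$.
\item The arithmetic checks out: $t_m=(1+t_{m-1})/2$, $2-t_{m-1}\le 3/2$, and $X\ge 16$.
\end{itemize}

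Three small points need tidying.

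First, your opening definition is off by one. A copy of $\AG(k-1,2)$ is a coset $u+W$ with $\dim W=k-1$, not $k$, since it has $2^{k-1}$ points. Your base case and your lifting step already use the correct convention, so only that sentence needs fixing.

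Second, the parenthetical about monotonicity needs a reason. The induction hypothesis is only available in ranks at least $m-1$. Noting that $2^{t_{m-1}r+1}$ increases with $r$ does not cover a projected set of rank $r<m-1$. There are two easy fixes:
\begin{itemize}
\item Add $n-1-r$ coloops to reach rank exactly $n-1$. This creates no $\AG(m-2,2)$-restriction, because $\AG(m-2,2)$ has no coloops when $m\ge 4$.
\item Alternatively, bound the image directly by $2^r-1\le 2^{m-2}-1$. This is below $2^{t_{m-1}(n-1)+1}$, because $t_{m-1}(m-1)+1=m-(m-1)2^{-(m-3)}>m-2$.
\end{itemize}

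Third, you conclude that the projected matroid has no $\AG(m-2,2)$-restriction because it contains no such coset. That step uses the fact that every $\AG(k-1,2)$-restriction of a set of vectors over $\GF(2)$ is the complement of a hyperplane in the subspace it spans. This follows from unique representability of binary matroids. It is standard, but it deserves a sentence.
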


Bonin and Qin also gave a lower bound for $\operatorname{ex}_2(\AG(2,2);n)$~\cite[Corollary~22]{Bonin_Qin}.  It was pointed out by Geelen~\cite{MU_HJ} that the lower bound can be improved to give $\operatorname{ex}_2(\AG(2,2);n)=\Theta(2^{n/2})$.

The cycle matroids of complete bipartite graphs are affine over $\GF(q)$ for every prime power $q$.  Thus studying the extremal number of $M(K_{s,t})$ is a natural matroidal counterpart of the classical bipartite Tur\'an problem, and addresses a sparse case not resolved by the matroidal Erd\H{o}s--Stone Theorem.  Our main result, stated below, gives a K\H{o}vari--S\'os--Tur\'an-type upper bound for the extremal number of $M(K_{s,t})$.  Note that $M(K_{1,t})$ is the free matroid $U_{t,t}$ so it is contained as a restriction in every simple matroid of rank at least $t$.  We therefore focus on the cases $2\leq s\leq t$.

\begin{theorem}[Matroidal K\H{o}vari--S\'os--Tur\'an Theorem]\label{thm:main-upper}
For a prime power $q$ and integers $s$ and $t$ with $2\leq s\leq t$,
\[
    \exq(M(K_{s,t});n)=O_{q,s,t}\bigl(q^{(1-1/s)n}\bigr).
\]
\end{theorem}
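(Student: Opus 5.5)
We identify $M$ with a set $E\subseteq \F_q^n\setminus\{0\}$ containing one nonzero representative vector for each point, so $E$ spans $\F_q^n$ and no two elements of $E$ are parallel. The plan is an additive (Cayley-graph) version of the K\H{o}vari--S\'os--Tur\'an double count. The basic observation is this: suppose $u_1,\dots,u_s,w_1,\dots,w_t\in\F_q^n$ are such that $u_1=0$, the $s+t-1$ vectors $u_2,\dots,u_s,w_1,\dots,w_t$ are linearly independent, and $u_i-w_j\in E$ for all $i,j$. Then the vectors $u_i-w_j$ represent $M(K_{s,t})$. Indeed, they are the image of the signed incidence vectors $e_{a_i}-e_{b_j}$ of $K_{s,t}$ under a linear map that is injective on the sum-zero subspace. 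That map sends $e_{a_i}-e_{a_1}\mapsto u_i$ and $e_{b_j}-e_{a_1}\mapsto -w_j$, and the independence hypothesis makes it injective. So the distinct vectors $u_i-w_j$ are nonparallel elements of $E$ forming an $M(K_{s,t})$-restriction. It therefore suffices to show that $|E|\ge Cq^{(1-1/s)n}$, for a suitable constant $C=C(q,s,t)$, forces such a configuration.

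\textbf{Step 1 (double count).} We count $(s-1)$-tuples $(u_2,\dots,u_s)$ together with a ``common neighbour'' $w$ satisfying $-w\in E$ and $u_i-w\in E$ for all $i\ge2$; equivalently we count $(w,e_2,\dots,e_s)$ with $w\in -E$, $e_i\in E$ and $u_i=w+e_i$. There are exactly $|E|^s$ such configurations. For a tuple $\mathbf u=(u_2,\dots,u_s)$, let $W(\mathbf u)$ be its set of common neighbours, so $\sum_{\mathbf u}|W(\mathbf u)|=|E|^s$.

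\textbf{Step 2 (discard degenerate tuples).} Call $\mathbf u$ degenerate if $u_2,\dots,u_s$ are not linearly independent. The number of degenerate tuples is at most $(s-1)q^{s}\,q^{n(s-2)}$: some $u_i$ lies in the span of the others, which leaves at most $q^{s-2}$ choices for it. Each degenerate tuple has $|W(\mathbf u)|\le|E|$. Degenerate tuples thus contribute at most $O_{q,s}(q^{n(s-2)}|E|)$, and this is at most $|E|^s/2$ once $|E|^{s-1}\gg q^{n(s-2)}$. That inequality holds for $|E|\ge Cq^{(1-1/s)n}$ because $(s-1)^2/s\ge s-2$. Hence nondegenerate tuples contribute at least $|E|^s/2$. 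Since there are at most $q^{n(s-1)}$ such tuples, some nondegenerate $\mathbf u$ has
\[
|W(\mathbf u)|\ \ge\ \tfrac12|E|^s/q^{n(s-1)}\ \ge\ C^s/2 .
\]

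\textbf{Step 3 (greedy independent choice).} Choose $C$ so that $C^s/2>q^{s+t-2}$. We pick $w_1,\dots,w_t\in W(\mathbf u)$ greedily, each avoiding the span of $u_2,\dots,u_s$ and the previously chosen $w$'s. That span has at most $q^{s+t-2}$ elements, so the greedy choice never gets stuck. The result is $u_1=0,u_2,\dots,u_s,w_1,\dots,w_t$ exactly as in the opening observation, so $M$ has an $M(K_{s,t})$-restriction, which is a contradiction. Therefore $|E(M)|<Cq^{(1-1/s)n}$.

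I expect the main obstacle to be the genericity issue, namely ensuring that the $s+t$ ``vertex'' vectors are in general position. Unlike the graph case, common neighbours here can be linearly dependent on the $u_i$. This is why the argument separates degenerate tuples (Step 2, where the exponent inequality $(s-1)^2/s\ge s-2$ is essential) and uses a constant-size greedy step. Everything else is the standard averaging argument.
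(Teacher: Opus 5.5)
Your proof is correct and follows essentially the paper's route: fixing $u_1=0$ plays the role of the paper's quotient by the $q^n$ translations, your greedy step is the bound $|N_B(X)|\le q^{s+t-2}$ of Corollary~\ref{cor:common-bound}, and your incidence-vector map replaces the broom-tree argument of Lemma~\ref{lem:grid}. One small sign slip: to get $u_i-w_j$ rather than $u_i+w_j$ you should send $e_{b_j}-e_{a_1}\mapsto w_j$, not $-w_j$, and injectivity is unaffected. The only substantive differences are that the paper bounds degenerate configurations by counting affinely dependent tuples in $B^s$ (at most $d_{q,s}|B|^{s-1}$), which avoids your exponent comparison $(s-1)^2/s\ge s-2$, and that it works in the $q$-expansion $B$, gaining a factor of $q-1$ in the constant.
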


A more detailed statement of the theorem along with its proof are given in Section~\ref{sec:upper}.  As in the bipartite Tur\'an problem, finding a matching lower bound remains open in general.  In the case $q=s=t=2$, determining $\operatorname{ex}_2(M(K_{2,2});n)$ is essentially equivalent to determining the maximum size of a binary Sidon set in $\F_2^n$.  In Section~\ref{sec:sidon}, we explain the intrinsic relation between these two problems and use results on Sidon sets to prove the following result.

\begin{theorem}\label{thm:binary-sidon}
For $t\geq2$ and $n\geq6$,
\[
    \operatorname{ex}_2(M(K_{2,t});n)=\Theta_t(2^{n/2}).
\]
\end{theorem}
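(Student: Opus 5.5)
For $t\geq 2$ the upper bound is immediate from Theorem~\ref{thm:main-upper} with $q=s=2$: it gives $\operatorname{ex}_2(M(K_{2,t});n)=O_t(2^{n/2})$. So the real work is a lower bound of order $2^{n/2}$, uniformly in $t$. I will get it from a single family of binary matroids with no $M(K_{2,2})$-restriction.

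This suffices for every $t\geq 2$, because $M(K_{2,2})$ is a restriction of $M(K_{2,t})$. Moreover, $M(K_{2,2})=M(C_4)$ is a $4$-element circuit. So it is enough to build a simple rank-$n$ binary matroid with $\Omega(2^{n/2})$ elements and no $4$-element circuit. In $\F_2^n$ such a matroid is exactly a set of distinct nonzero vectors, spanning $\F_2^n$, in which no four distinct elements sum to $0$. This is a binary Sidon set, namely one in which all pairwise sums of distinct elements are distinct.

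For the construction, let $k=\lfloor n/2\rfloor$ and identify $\F_2^{2k}$ with $\GF(2^k)^2$. Take
\[
S=\{(x,x^3): x\in \GF(2^k)\setminus\{0\}\}.
\]
Its elements are distinct and nonzero, so the restriction of $\PG(2k-1,2)$ to $S$ is simple.

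To check the Sidon property, suppose $x,y,z,w$ are distinct and
\[
x+y=z+w=a,\qquad x^3+y^3=z^3+w^3 .
\]
Since $x\neq y$, we have $a\neq 0$. Also $x^3+y^3=(x+y)(x^2+xy+y^2)=a(a^2+xy)$, so the second equation gives $xy=zw$. Hence $\{x,y\}$ and $\{z,w\}$ are both the root set of $T^2+aT+xy$, so they are equal. This is a contradiction, so $S$ has no $4$-element circuit, and $|S|=2^k-1$.

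It remains to reach rank exactly $n$. Here $n$ may be odd, and $S$ might not span $\F_2^{2k}$. Let $r=r(S)$. I embed $\spn(S)$ in $\F_2^n$ and add vectors $e_1,\dots,e_{n-r}$ one at a time, each chosen outside the span of $S$ together with all earlier $e_i$.

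Suppose the enlarged set contained a $4$-element circuit. If it used some added vector, let $e_j$ be the one of largest index among them. Then $e_j$ would be a sum of elements from $S\cup\{e_1,\dots,e_{j-1}\}$, contradicting its choice. So any $4$-element circuit lies inside $S$, which is impossible.

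The result is a simple rank-$n$ binary matroid with at least $2^{\lfloor n/2\rfloor}-1\geq 2^{(n-1)/2}-1$ elements and no $M(K_{2,t})$-restriction. The hypothesis $n\geq 6$ only ensures $k\geq 3$, so the constants are harmless.

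The only genuinely non-routine step is the Sidon verification above, which is the classical cube construction. The main obstacle for the theorem as a whole lies in Theorem~\ref{thm:main-upper}, which I am assuming here.
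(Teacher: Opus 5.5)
Your proof is correct and follows the paper's outline. The upper bound is Theorem~\ref{thm:main-upper} with $q=s=2$. The lower bound comes from a binary Sidon set, which has no $M(K_{2,2})$-restriction and hence no $M(K_{2,t})$-restriction, padded to rank $n$ by elements outside the span; this padding is equivalent to the paper's direct sum with copies of $U_{1,1}$.

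The one difference is that you construct and verify the Sidon set $\{(x,x^3):x\in\GF(2^k)-\{0\}\}$ yourself, which makes the argument self-contained. The paper instead cites Czerwinski--Pott for slightly larger sets, giving $2^{(n-1)/2}+1$ rather than your $2^{\lfloor n/2\rfloor}-1$, and this difference is immaterial for $\Theta_t$.
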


\section{\texorpdfstring{$K_{s,t}$-restrictions in $\GF(q)$-representable matroids}{K(s,t)-restrictions in GF(q)-representable matroids}}\label{sec:representation}

Throughout the paper, we fix a prime power $q$, and let $\F$ be a field of order $q$.  We view a simple rank-$n$ $\GF(q)$-representable matroid $M$ as the vector matroid on a set $A\subseteq\F^n$. Since $M$ is simple, every vector in $A$ is nonzero, and no two vectors in $A$ are scalar multiples of one another.

The {\it $q$-expansion} of $A$ is the set $\qexp(A)=\{\lambda a:a\in A,\ \lambda\in\F-\{0\}\}$. The scalar orbits of distinct vectors in $A$ are disjoint, so $|\qexp(A)|=(q-1)|A|$. Let $M[\qexp(A)]$ denote the vector matroid with ground set $\qexp(A)$. This matroid is obtained from $M$ by replacing each element with a parallel class of size $q-1$. We omit the elementary proof of the following proposition.

\begin{proposition}\label{prop:simple_restriction_in_expansion}
Let $M$ be a simple rank-$n$ $\GF(q)$-representable matroid with ground set $A$. If a simple matroid $N$ is a restriction of $M[\qexp(A)]$, then $M$ has a restriction isomorphic to $N$.
\end{proposition}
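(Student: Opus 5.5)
We need to show that if $N$ is a simple restriction of $M[\qexp(A)]$, then $M$ has a restriction isomorphic to $N$. The plan is to pick one representative from each scalar orbit used by $N$.

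Let $B\subseteq\qexp(A)$ be a set with $M[\qexp(A)]|B\cong N$. We first observe that, since $N$ is simple, no two vectors of $B$ are scalar multiples of one another: two such vectors would form a parallel pair, or a loop if one were zero. Zero is excluded anyway, since $0\notin\qexp(A)$.

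Next, each $b\in B$ has the form $b=\lambda_b a_b$ with $a_b\in A$ and $\lambda_b\in\F-\{0\}$. The vector $a_b$ is uniquely determined, because the scalar orbits of distinct elements of $A$ are disjoint. Define $\phi\colon B\to A$ by $\phi(b)=a_b$.

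The map $\phi$ is injective: if $a_b=a_{b'}$, then $b$ and $b'$ are scalar multiples of each other, so $b=b'$ by the observation above. Let $B'=\phi(B)\subseteq A$.

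It remains to check that $\phi$ is an isomorphism from $M[\qexp(A)]|B$ to $M|B'$. For any $X\subseteq B$, rescaling each vector by a nonzero scalar does not change its span. Hence $\spn(X)=\spn(\phi(X))$, so $X$ is linearly independent if and only if $\phi(X)$ is, with the same cardinality because $\phi$ is injective. Therefore $\phi$ preserves independent sets in both directions, and $M|B'\cong N$.

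There is no real obstacle here. The only point requiring care is the injectivity of $\phi$, and that is exactly where the simplicity of $N$ is used.
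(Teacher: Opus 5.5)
Your proof is correct. The paper omits this proof as elementary. Your argument is the natural one implicit in its remark that $M[\qexp(A)]$ is $M$ with each element replaced by a parallel class of size $q-1$: a simple restriction meets each class $\{\lambda a:\lambda\in\F-\{0\}\}$ at most once, and rescaling to the representatives in $A$ is injective and preserves spans, hence independence.
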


The following lemma gives a sufficient condition for the existence of an $M(K_{s,t})$-restriction in terms of the $q$-expansion.

\begin{lemma}\label{lem:grid}
Let $M$ be a simple rank-$n$ $\GF(q)$-representable matroid with ground set $A$, and let $B=\qexp(A)$. For $2\leq s\leq t$, if there is a linearly independent subset
\[
\{p,v_2,v_3,\ldots,v_s,u_2,u_3,\ldots,u_t\}
\]
of $\F^n$ such that $p+v_i+u_j\in B$ for all $1\leq i\leq s$ and $1\leq j\leq t$, where $v_1=u_1=0$, then $M$ has an $M(K_{s,t})$-restriction.
\end{lemma}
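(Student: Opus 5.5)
The plan is to show that the $st$ vectors $p+v_i+u_j$ ($1\le i\le s$, $1\le j\le t$) form a restriction of $M[\qexp(A)]$ isomorphic to $M(K_{s,t})$. Proposition~\ref{prop:simple_restriction_in_expansion} then yields an $M(K_{s,t})$-restriction of $M$. The natural comparison is with a standard representation of $M(K_{s,t})$. Label the vertices $a_1,\dots,a_s$ on one side and $b_1,\dots,b_t$ on the other. Over any field, the signed incidence vectors $e_{a_i}-e_{b_j}$ represent $M(K_{s,t})$, and these vectors span the $(s+t-1)$-dimensional subspace $W$ of $\F^{\{a_i\}\cup\{b_j\}}$ consisting of vectors whose coordinates sum to zero.

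First I choose a basis of $W$ indexed to match the given independent set. Set
\[
w_0=e_{a_1}-e_{b_1},\qquad w_{v_i}=e_{a_i}-e_{a_1}\ (2\le i\le s),\qquad w_{u_j}=e_{b_1}-e_{b_j}\ (2\le j\le t).
\]
These are $s+t-1$ vectors in $W$. They are linearly independent: the vectors $w_{v_i}$ and $w_{u_j}$ are independent because each one is the only vector in the list with support on $a_i$ (respectively $b_j$), and $w_0$ is outside their span. Now put $w_{v_1}=w_{u_1}=0$. Then
\[
w_0+w_{v_i}+w_{u_j}=e_{a_i}-e_{b_j}
\]
for all $i,j$, and this identity also covers the cases $i=1$ or $j=1$.

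Next, the set $\{p,v_2,\dots,v_s,u_2,\dots,u_t\}$ is linearly independent in $\F^n$. So there is an injective linear map $\phi\colon W\to\F^n$ sending $w_0\mapsto p$, $w_{v_i}\mapsto v_i$ and $w_{u_j}\mapsto u_j$. By linearity, $\phi(e_{a_i}-e_{b_j})=p+v_i+u_j$. An injective linear map preserves linear dependence and independence of every subset. Hence the vector matroid on $\{p+v_i+u_j\}$ is isomorphic to the vector matroid on $\{e_{a_i}-e_{b_j}\}$, which is $M(K_{s,t})$. Since $M(K_{s,t})$ is simple, these $st$ vectors are distinct and pairwise non-parallel.

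Finally, by hypothesis these $st$ vectors all lie in $B=\qexp(A)$. So $M[\qexp(A)]$ has a simple restriction isomorphic to $M(K_{s,t})$, and Proposition~\ref{prop:simple_restriction_in_expansion} gives the conclusion. There is no real obstacle in this argument. The only step needing care is confirming that the change of basis in $W$ is valid, that is, that the chosen $w$'s are genuinely independent and span $W$, together with the standard fact that signed incidence vectors represent the cycle matroid over every field.
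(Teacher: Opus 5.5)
Your proof is correct and is essentially the paper's argument. With your $a_i,b_j$ in the role of the paper's $x_i,y_j$, your basis vectors $w_0$, $w_{v_i}$, $w_{u_j}$ are exactly the signed incidence vectors of the edges $x_1y_1$, $x_ix_1$, $y_1y_j$ of the paper's broom-shaped spanning tree $T$. Your identity $w_0+w_{v_i}+w_{u_j}=e_{a_i}-e_{b_j}$ is the paper's observation that $p+v_i+u_j$ is the sum of the vectors on the tree path from $x_i$ to $y_j$.

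The difference is in packaging, and yours is the tighter version. Transporting the signed-incidence representation by an injective linear map carries every dependency across at once. The paper's step ``by comparing the fundamental circuits'' implicitly needs the all-$+1$ path coefficients to come from a consistent orientation of the broom, since matching fundamental circuits alone does not determine a matroid over fields other than $\GF(2)$. Your explicit computation verifies exactly this.
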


\begin{proof}
Let $(\{x_1,x_2,\ldots,x_s\},\{y_1,y_2,\ldots,y_t\})$ be the bipartition of a copy $H$ of $K_{s,t}$. Let $G$ be obtained from $H$ by adding the edges
\[
\{x_ix_1:2\leq i\leq s\}\cup\{y_1y_j:2\leq j\leq t\},
\]
and let $T$ be the spanning tree of $G$, shown in Figure~\ref{fig:broom}, whose edge set is
\[
\{x_1y_1\}\cup\{x_ix_1:2\leq i\leq s\}\cup\{y_1y_j:2\leq j\leq t\}.
\]

\begin{figure}[htb]
\centering
\resizebox{6cm}{!}{\tikzset{every picture/.style={line width=0.75pt}}

\begin{tikzpicture}[x=0.75pt,y=0.75pt,yscale=-1,xscale=1]
\draw[fill=black] (216.07,136.94) .. controls (216.07,135.04) and
  (217.61,133.5) .. (219.52,133.5) .. controls (221.42,133.5) and
  (222.96,135.04) .. (222.96,136.94) .. controls (222.96,138.85) and
  (221.42,140.39) .. (219.52,140.39) .. controls (217.61,140.39) and
  (216.07,138.85) .. (216.07,136.94) -- cycle;
\draw[fill=black] (267.03,136.94) .. controls (267.03,135.04) and
  (268.58,133.5) .. (270.48,133.5) .. controls (272.38,133.5) and
  (273.92,135.04) .. (273.92,136.94) .. controls (273.92,138.85) and
  (272.38,140.39) .. (270.48,140.39) .. controls (268.58,140.39) and
  (267.03,138.85) .. (267.03,136.94) -- cycle;
\draw[fill=black] (163,102.44) circle (3.45);
\draw[fill=black] (163,152.82) circle (3.45);
\draw[fill=black] (163,171.44) circle (3.45);
\draw[fill=black] (327,92.94) circle (3.45);
\draw[fill=black] (327,143.5) circle (3.45);
\draw[fill=black] (327,162.22) circle (3.45);
\draw[fill=black] (327,180.94) circle (3.45);

\draw (219.52,136.94) -- (270.48,136.94);
\draw (163,102.44) -- (219.52,136.94);
\draw (163,152.82) -- (219.52,136.94);
\draw (159.55,171.44) -- (219.52,136.94);
\draw (270.48,136.94) -- (327,92.94);
\draw (270.48,136.94) -- (327,143.5);
\draw (270.48,136.94) -- (327,162.22);
\draw (270.48,136.94) -- (327,180.94);

\draw (160,115) node[anchor=north west,inner sep=0.75pt] {$\vdots$};
\draw (323,108) node[anchor=north west,inner sep=0.75pt] {$\vdots$};
\draw (215,116.4) node[anchor=north west,inner sep=0.75pt] {$x_1$};
\draw (258,116.4) node[anchor=north west,inner sep=0.75pt] {$y_1$};
\draw (138.5,97.4) node[anchor=north west,inner sep=0.75pt] {$x_2$};
\draw (131.5,145.4) node[anchor=north west,inner sep=0.75pt] {$x_{s-1}$};
\draw (138.5,166.4) node[anchor=north west,inner sep=0.75pt] {$x_s$};
\draw (338,87.4) node[anchor=north west,inner sep=0.75pt] {$y_2$};
\draw (339,137.4) node[anchor=north west,inner sep=0.75pt] {$y_{t-2}$};
\draw (338,158.4) node[anchor=north west,inner sep=0.75pt] {$y_{t-1}$};
\draw (337,180.4) node[anchor=north west,inner sep=0.75pt] {$y_t$};
\end{tikzpicture}}%
\caption{The spanning tree $T$ in $G$.}
\label{fig:broom}
\end{figure}

Define a map $\varphi:E(G)\to\F^n$ by
\[
\varphi(e)=
\begin{cases}
v_i, & \text{if }e=x_ix_1\text{ for some }2\leq i\leq s,\\
u_j, & \text{if }e=y_1y_j\text{ for some }2\leq j\leq t,\\
p+v_i+u_j, & \text{if }e=x_iy_j\text{ for some }1\leq i\leq s
\text{ and }1\leq j\leq t.
\end{cases}
\]
Since $\{p,v_2,v_3,\ldots,v_s,u_2,u_3,\ldots,u_t\}$ is linearly independent, $\varphi(E(T))$ is linearly independent. For each edge $x_iy_j$ of $E(H)-\{x_1y_1\}$, the vector $p+v_i+u_j$ is the sum of the vectors assigned to the edges of the unique path in $T$ from $x_i$ to $y_j$. Hence $\varphi(E(T))$ is a basis of the vector matroid on $\varphi(E(G))$. Moreover, the fundamental circuit of $\varphi(x_iy_j)$ with respect to $\varphi(E(T))$ is the image under $\varphi$ of the fundamental circuit of $x_iy_j$ with respect to $E(T)$. It follows, by comparing the fundamental circuits, that $\varphi:E(G)\to\varphi(E(G))$ is an isomorphism from the cycle matroid $M(G)$ to the vector matroid on
\[
\{v_2,v_3,\ldots,v_s,u_2,u_3,\ldots,u_t\}
\cup
\{p+v_i+u_j:1\leq i\leq s,\ 1\leq j\leq t\}.
\]

Since
\[
G\backslash\bigl(E(T)-\{x_1y_1\}\bigr)=H,
\]
it follows that the vector matroid on
\[
\{p+v_i+u_j:1\leq i\leq s,\ 1\leq j\leq t\}
\]
is isomorphic to $M(H)$. The last set is contained in $B$, so $M[\qexp(A)]$ has an $M(K_{s,t})$-restriction. By Proposition~\ref{prop:simple_restriction_in_expansion}, $M$ has an $M(K_{s,t})$-restriction.
\end{proof}

An ordered tuple $(x_1,x_2,\ldots,x_s)$ in $(\F^n)^s$ is {\it affinely independent} if $\{x_2-x_1,x_3-x_1,\ldots,x_s-x_1\}$ is linearly independent. Equivalently, there are no scalars $\alpha_1,\alpha_2,\ldots,\alpha_s$ in $\F$, not all zero, such that $\sum_{i=1}^s\alpha_i=0$ and $\sum_{i=1}^s\alpha_ix_i=0$. The ordered tuple is {\it affinely dependent} if it is not affinely independent.

Let $B$ be a subset of $\F^n$, and let $X=(x_1,x_2,\ldots,x_s)$ be affinely independent. A vector $y\in\F^n$ is a {\it common neighbor} of $X$ in $B$ if $x_i+y\in B$ for every $1\leq i\leq s$. We write $N_B(X)=\{y\in\F^n:x_i+y\in B\text{ for every }1\leq i\leq s\}$. Let $W_X=\spn\{x_i-x_1:2\leq i\leq s\}$. Then $\dim W_X=s-1$.

\begin{lemma}\label{lem:quotient-neighborhood}
For $2\leq s\leq t$, let $M$ be a simple rank-$n$ $\GF(q)$-representable matroid with ground set $A$, let $B=\qexp(A)$, and let $X$ be an affinely independent $s$-tuple $(x_1,x_2,\ldots,x_s)$ in $(\F^n)^s$. If the image of $x_1+N_B(X)$ in $\F^n/W_X$ contains $t$ linearly independent vectors, then $M$ has an $M(K_{s,t})$-restriction.
\end{lemma}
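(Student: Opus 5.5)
We reduce to Lemma~\ref{lem:grid}. Choose $y_1,\ldots,y_t\in N_B(X)$ such that the images of $x_1+y_1,\ldots,x_1+y_t$ in $\F^n/W_X$ are linearly independent. Set
\[
p=x_1+y_1,\qquad v_i=x_i-x_1\ (2\le i\le s),\qquad u_j=y_j-y_1\ (2\le j\le t).
\]
Taking $v_1=u_1=0$, we get $p+v_i+u_j=x_i+y_j$, which lies in $B$ because $y_j\in N_B(X)$. So once we check that
\[
\{p,v_2,\ldots,v_s,u_2,\ldots,u_t\}
\]
is linearly independent, Lemma~\ref{lem:grid} gives the $M(K_{s,t})$-restriction.

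For independence, suppose
\[
\alpha p+\sum_{i\ge2}\beta_i v_i+\sum_{j\ge2}\gamma_j u_j=0.
\]
Every $v_i$ lies in $W_X$, so passing to the quotient $\F^n/W_X$ and writing $\bar{\cdot}$ for images, we get
\[
\alpha\,\overline{x_1+y_1}+\sum_{j\ge2}\gamma_j\bigl(\overline{x_1+y_j}-\overline{x_1+y_1}\bigr)=0.
\]
Here we used $u_j=(x_1+y_j)-(x_1+y_1)$. The coefficient of $\overline{x_1+y_j}$ for $j\ge2$ is $\gamma_j$. By the assumed independence of the images, all $\gamma_j=0$, and then $\alpha=0$. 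What remains is $\sum_{i\ge2}\beta_i v_i=0$. Since $X$ is affinely independent, $v_2,\ldots,v_s$ form a basis of $W_X$, so all $\beta_i=0$.

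The only point needing care is this choice of coordinates: the affine-difference vectors $u_j$ must be rewritten in terms of the independent quotient images, so that independence modulo $W_X$ combines with the basis of $W_X$. No real obstacle is expected. Note also that the independent images are automatically distinct and nonzero, so the $y_j$ are distinct, as Lemma~\ref{lem:grid} implicitly needs.
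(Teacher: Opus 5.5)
Your proof is correct and matches the paper's argument essentially verbatim. It uses the same choice of $p=x_1+y_1$, $v_i=x_i-x_1$, $u_j=y_j-y_1$, and the same reduction modulo $W_X$, where each $u_j$ is rewritten as a difference of the independent cosets, followed by the same appeal to Lemma~\ref{lem:grid}.
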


\begin{proof}
Let $\{y_1,y_2,\ldots,y_t\}$ be a subset of $N_B(X)$ so that the cosets
\[
x_1+y_1+W_X,x_1+y_2+W_X,\ldots,x_1+y_t+W_X
\]
are linearly independent in $\F^n/W_X$. Define
\begin{align*}
p&=x_1+y_1,\\
v_i&=x_i-x_1\text{ for }1\leq i\leq s,\text{ and}\\
u_j&=y_j-y_1\text{ for }1\leq j\leq t.
\end{align*}

First, we show that

\begin{sublemma}\label{sublem:linearly_independent}
$\{p,v_2,v_3,\ldots,v_s,u_2,u_3,\ldots,u_t\}$ is linearly independent.
\end{sublemma}

Suppose that there is a nontrivial linear relation
\begin{equation}\label{equ:first_le}
\alpha p+\sum_{i=2}^s\beta_iv_i+\sum_{j=2}^t\gamma_ju_j=0.
\end{equation}
Modulo $W_X$, this gives
\[
\alpha(x_1+y_1+W_X)+\sum_{j=2}^t\gamma_j(y_j-y_1+W_X)=0.
\]
Since $y_j-y_1+W_X=(x_1+y_j+W_X)-(x_1+y_1+W_X)$ for each $2\leq j\leq t$, we obtain
\[
\left(\alpha-\sum_{j=2}^t\gamma_j\right)(x_1+y_1+W_X)
+\sum_{j=2}^t\gamma_j(x_1+y_j+W_X)=0.
\]
The cosets $x_1+y_1+W_X,x_1+y_2+W_X,\ldots,x_1+y_t+W_X$ are linearly independent, so every $\gamma_j$ is zero and then $\alpha=0$. Therefore, \eqref{equ:first_le} becomes
\[
\sum_{i=2}^s\beta_iv_i=0.
\]
Since $X$ is affinely independent, $v_2,v_3,\ldots,v_s$ are linearly independent. It follows that every $\beta_i$ is also zero, contradicting the assumption that~\eqref{equ:first_le} is nontrivial. Therefore~\ref{sublem:linearly_independent} holds.

Finally, $p+v_i+u_j=x_i+y_j\in B$ for all $1\leq i\leq s$ and $1\leq j\leq t$. Since $v_1=u_1=0$, the result follows from Lemma~\ref{lem:grid}.
\end{proof}

\begin{corollary}\label{cor:common-bound}
For $2\leq s\leq t$, let $M$ be a simple rank-$n$ $\GF(q)$-representable matroid with ground set $A\subseteq\F^n-\{0\}$, and let $B=\qexp(A)$. If $M$ has no $M(K_{s,t})$-restriction, then every affinely independent $X\in(\F^n)^s$ satisfies
\[
|N_B(X)|\leq q^{s+t-2}.
\]
\end{corollary}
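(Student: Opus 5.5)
The plan is to argue by contrapositive through Lemma~\ref{lem:quotient-neighborhood}. Suppose $M$ has no $M(K_{s,t})$-restriction and fix an affinely independent $X=(x_1,\ldots,x_s)$. By Lemma~\ref{lem:quotient-neighborhood}, the image of the translated set $x_1+N_B(X)$ in the quotient $\F^n/W_X$ contains no $t$ linearly independent vectors. Hence the subspace $U$ of $\F^n/W_X$ spanned by this image has dimension at most $t-1$. Indeed, the span of any subset of a vector space is spanned by a linearly independent subset of it, so $\dim U$ equals the maximum number of linearly independent vectors in the image.

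Next I pull this back to $\F^n$. Let $\pi:\F^n\to\F^n/W_X$ be the quotient map. Every element of $x_1+N_B(X)$ lies in $\pi^{-1}(U)$. This preimage is a subspace of $\F^n$ containing $W_X$, and its dimension is $\dim U+\dim W_X\leq (t-1)+(s-1)=s+t-2$. Therefore
\[
|N_B(X)|=|x_1+N_B(X)|\leq|\pi^{-1}(U)|\leq q^{s+t-2},
\]
where the first equality holds because translation by $x_1$ is a bijection of $\F^n$.

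No step here is a real obstacle. The only points needing care are the equivalence between ``no $t$ linearly independent vectors in the image'' and ``the span has dimension at most $t-1$'', and the fact that $\dim W_X=s-1$. The latter was already noted from affine independence of $X$. The hypothesis $A\subseteq\F^n-\{0\}$ plays no role beyond making $M$ simple, as in the setting of Lemma~\ref{lem:quotient-neighborhood}.
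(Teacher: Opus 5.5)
Your proposal is correct and follows essentially the same route as the paper: apply Lemma~\ref{lem:quotient-neighborhood} to bound the dimension of the span of the image by $t-1$, then account for the $q^{s-1}$ elements in each coset of $W_X$. Your pullback $\pi^{-1}(U)$, of dimension at most $s+t-2$, is just a repackaging of the paper's count $q^{t-1}\cdot q^{s-1}$.
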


\begin{proof}
By Lemma~\ref{lem:quotient-neighborhood}, the image of $x_1+N_B(X)$ in $\F^n/W_X$ spans a subspace of dimension at most $t-1$, so it has at most $q^{t-1}$ elements. Each coset of $W_X$ has $q^{s-1}$ elements. Therefore
\[
|N_B(X)|\leq q^{t-1}q^{s-1}=q^{s+t-2}.\qedhere
\]
\end{proof}

This corollary is analogous to the basic observation that, in a graph for which $K_{s,t}$ does not occur as a subgraph, no set of $s$ vertices has more than $t-1$ common neighbors.

\section{\texorpdfstring{Number of affinely independent $s$-tuples}{Number of affinely independent s-tuples}}\label{sec:affine-tuples}

Recall that an $s$-tuple $(x_1,x_2,\ldots,x_s)\in(\F^n)^s$ is affinely dependent if there is a nonzero vector $(\alpha_1,\alpha_2,\dots,\alpha_s)\in\F^s$ such that $\sum_{i=1}^s\alpha_i=0$ and $\sum_{i=1}^s\alpha_ix_i=0$. We call such a vector a {\it solution} for $(x_1,x_2,\ldots,x_s)$. A solution is {\it normalized} if its first nonzero entry is one. Since every solution can be multiplied by a nonzero scalar to obtain a normalized solution, an $s$-tuple is affinely dependent if and only if it has a normalized solution. Let $\mathcal L_s$ be the set of nonzero vectors $(\alpha_1,\alpha_2,\ldots,\alpha_s)\in\F^s$ such that $\sum_i\alpha_i=0$ and the first nonzero entry is one. 

\begin{proposition}
$|\mathcal L_s|=\dfrac{q^{s-1}-1}{q-1}$.
\end{proposition}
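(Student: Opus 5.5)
The plan is to identify $\mathcal L_s$ with the set of one-dimensional subspaces of a fixed $(s-1)$-dimensional subspace of $\F^s$, and then count those lines.

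Let $H=\{(\alpha_1,\ldots,\alpha_s)\in\F^s:\sum_i\alpha_i=0\}$. This is the kernel of the nonzero linear functional $\alpha\mapsto\sum_i\alpha_i$, so $\dim H=s-1$ and $|H|=q^{s-1}$. By definition, $\mathcal L_s$ consists of the nonzero vectors of $H$ whose first nonzero entry equals one. The key step is to show that $\mathcal L_s$ meets every one-dimensional subspace of $H$ in exactly one vector. Take a nonzero $\alpha\in H$ and let $k$ be the index of its first nonzero entry. For $\lambda\in\F-\{0\}$, the vector $\lambda\alpha$ also lies in $H$, has the same first nonzero position $k$, and has $k$-th entry $\lambda\alpha_k$. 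This entry equals one exactly when $\lambda=\alpha_k^{-1}$. So each line $\{\lambda\alpha:\lambda\in\F-\{0\}\}$ of nonzero vectors in $H$ contains exactly one element of $\mathcal L_s$.

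It follows that the $q^{s-1}-1$ nonzero vectors of $H$ are partitioned into scalar orbits of size $q-1$, and each orbit contributes exactly one element of $\mathcal L_s$. Therefore
\[
|\mathcal L_s|=\frac{q^{s-1}-1}{q-1}.
\]

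There is no real obstacle here. The only point needing care is that $H$ is closed under scalar multiplication and that scaling does not change the position of the first nonzero entry, which makes the normalization well defined within each orbit.
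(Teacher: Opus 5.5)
Your proof is correct and follows essentially the same route as the paper's: both count the $q^{s-1}-1$ nonzero vectors with zero coordinate sum, group them into scalar orbits of size $q-1$, and use the position of the first nonzero entry to show that each orbit contains exactly one normalized vector. Your phrasing via one-dimensional subspaces of the hyperplane $H$ is only a cosmetic difference from the paper's direct bijection argument.
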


\begin{proof}
Let $\mathcal L_s'$ be the set of nonzero vectors $(\alpha_1,\alpha_2,\ldots,\alpha_s)\in\F^s$ such that $\sum_i\alpha_i=0$. Since the first $s-1$ entries determine the last entry, there are $q^{s-1}$ such vectors, one of which is zero. Therefore,
\[
|\mathcal L_s'|=q^{s-1}-1.
\]
Clearly
\[
\mathcal L_s'=\{\lambda v:\lambda\in\F-\{0\},\text{ and } v\in\mathcal L_s\}.
\]
If $\lambda_1 v=\lambda_2 w$ for $\lambda_1,\lambda_2\in\F-\{0\}$ and $v,w\in\mathcal L_s$, then comparing the first nonzero entries gives $\lambda_1=\lambda_2$. Hence $v=w$. Therefore,
\[
|\mathcal L_s'|=(q-1)|\mathcal L_s|.
\]
It follows that
\[
|\mathcal L_s|=\frac{|\mathcal L_s'|}{q-1}
=\frac{q^{s-1}-1}{q-1}.
\]
\end{proof}

For convenience, write
\[
    d_{q,s}=\frac{q^{s-1}-1}{q-1}.
\]

\begin{lemma}\label{lem:dependent-tuples}
Let $B$ be a subset of $\F^n$.  The number of affinely dependent $s$-tuples in $B^s$ is at most
 $d_{q,s}|B|^{s-1}$.
\end{lemma}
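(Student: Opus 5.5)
We plan a union bound over normalized solutions. By the discussion preceding the statement, an $s$-tuple $(x_1,\ldots,x_s)\in B^s$ is affinely dependent if and only if it has a normalized solution, that is, some $\alpha=(\alpha_1,\ldots,\alpha_s)\in\mathcal L_s$ with $\sum_i\alpha_ix_i=0$. Hence the set of affinely dependent tuples in $B^s$ is the union over $\alpha\in\mathcal L_s$ of
\[
S_\alpha=\Bigl\{(x_1,\ldots,x_s)\in B^s:\textstyle\sum_{i=1}^s\alpha_ix_i=0\Bigr\}.
\]
Since $|\mathcal L_s|=d_{q,s}$ by the preceding proposition, it suffices to show that $|S_\alpha|\leq|B|^{s-1}$ for each fixed $\alpha\in\mathcal L_s$.

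Fix $\alpha\in\mathcal L_s$. We first observe that $\alpha$ has at least two nonzero entries: it is nonzero, and if it had exactly one nonzero entry then $\sum_i\alpha_i$ would equal that entry, which is nonzero, contradicting $\sum_i\alpha_i=0$. Choose an index $k$ with $\alpha_k\neq0$; for instance, the first nonzero index, where $\alpha_k=1$. Then, for any choice of the other $s-1$ coordinates $x_i\in B$ ($i\neq k$), the relation $\sum_i\alpha_ix_i=0$ forces
\[
x_k=-\alpha_k^{-1}\sum_{i\neq k}\alpha_ix_i.
\]
So at most one $x_k$ completes the tuple, and it contributes only if this vector lies in $B$. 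The projection of $S_\alpha$ onto the coordinates other than $k$ is therefore injective into $B^{s-1}$, giving $|S_\alpha|\leq|B|^{s-1}$.

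Summing over $\alpha\in\mathcal L_s$ yields the bound $d_{q,s}|B|^{s-1}$. There is no real obstacle here. The only point needing care is that $\alpha$ has some nonzero coordinate to solve for, which holds because $\alpha\neq0$; the observation that $\alpha$ has at least two nonzero entries is not even required for the bound.
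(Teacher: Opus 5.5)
Your proposal is correct and matches the paper's proof essentially step for step: a union bound over the $d_{q,s}$ normalized solutions $\alpha\in\mathcal L_s$, where each solution set has size at most $|B|^{s-1}$ because the coordinate with $\alpha_k\neq0$ is determined by the others. As you note, the observation that $\alpha$ has at least two nonzero entries is not needed for the bound, and the paper does not use it.
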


\begin{proof}
For $\alpha=(\alpha_1,\alpha_2,\ldots,\alpha_s)\in\mathcal L_s$, let $\mathcal D_B(\alpha)$ be the set of $s$-tuples $(b_1,b_2,\ldots,b_s)$ in $B^s$ such that
\begin{align}
    \sum_{i=1}^s\alpha_i b_i=0.\label{equ:second_equation}
\end{align}

Choose $i_0$ in $\{1,2,\ldots,s\}$ with $\alpha_{i_0}\neq0$.  Suppose $(b_1,b_2,\ldots,b_s)\in \mathcal D_B(\alpha)$. Once the entries $b_i$ with $i\neq i_0$ are chosen, the equation~\eqref{equ:second_equation} determines
\[
    b_{i_0}=-\alpha_{i_0}^{-1}\sum_{i\neq i_0}\alpha_i b_i.
\]
This vector may not lie in $B$, nevertheless, $|\mathcal D_B(\alpha)|\leq |B|^{s-1}$.

Since every affinely dependent $s$-tuple has a normalized solution, every such $s$-tuple belongs to $\mathcal D_B(\alpha)$ for some $\alpha\in\mathcal L_s$.  Therefore, the number of affinely dependent $s$-tuples in $B^s$ is at most
\[
    \sum_{\alpha\in\mathcal L_s}|\mathcal D_B(\alpha)|
      \leq d_{q,s}|B|^{s-1}.
\]
\end{proof}

The following result is an immediate consequence of the last lemma.

\begin{corollary}\label{cor:independent-tuples}
Let $B$ be a subset of $\F^n$.  The number of affinely independent $s$-tuples in $B^s$ is at least
\[
    \max\{0,(|B|-d_{q,s})|B|^{s-1}\}.
\]
\end{corollary}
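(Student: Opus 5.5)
The corollary follows by complementary counting from Lemma~\ref{lem:dependent-tuples}. Every $s$-tuple in $B^s$ is either affinely independent or affinely dependent. The set $B^s$ has exactly $|B|^s$ elements, so the number of affinely independent $s$-tuples equals
\[
|B|^s-\#\{\text{affinely dependent $s$-tuples in }B^s\}.
\]
By Lemma~\ref{lem:dependent-tuples}, the subtracted quantity is at most $d_{q,s}|B|^{s-1}$. Hence the number of affinely independent $s$-tuples is at least
\[
|B|^s-d_{q,s}|B|^{s-1}=(|B|-d_{q,s})|B|^{s-1}.
\]

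A count is trivially nonnegative, so the number of affinely independent $s$-tuples is also at least $0$. Combining the two lower bounds gives the stated bound $\max\{0,(|B|-d_{q,s})|B|^{s-1}\}$.

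There is no real obstacle here. The only point to check is that the case distinction between dependent and independent tuples is exhaustive and disjoint, and this holds by definition, since ``affinely dependent'' was defined as ``not affinely independent.''
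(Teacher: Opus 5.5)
Your proposal is correct and is the paper's argument: the paper calls this corollary an immediate consequence of Lemma~\ref{lem:dependent-tuples}, meaning you subtract the at most $d_{q,s}|B|^{s-1}$ affinely dependent tuples from the $|B|^s$ tuples in $B^s$. The $\max\{0,\cdot\}$ comes only from the fact that a count is nonnegative.
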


\section{The proof of the upper bound}\label{sec:upper}

In this section, we prove the following strengthening of Theorem~\ref{thm:main-upper}.

\begin{theorem}\label{thm:detailed-upper}
For a prime power $q$ and integers $s$ and $t$ with $2\leq s\leq t$,
\[
    \exq(M(K_{s,t});n)
    \leq\frac{1}{q-1}\max\left\{
       2d_{q,s},
       (2q^{s+t-2})^{1/s}q^{(1-1/s)n}
    \right\}.
\]
\end{theorem}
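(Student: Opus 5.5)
Let $M$ be a simple rank-$n$ $\GF(q)$-representable matroid with ground set $A\subseteq\F^n-\{0\}$ and no $M(K_{s,t})$-restriction, and put $B=\qexp(A)$, so $|B|=(q-1)|A|$. It suffices to show that either $|B|\leq 2d_{q,s}$ or $|B|^s\leq 2q^{s+t-2}q^{(s-1)n}$. So we may assume $|B|>2d_{q,s}$; then $|B|-d_{q,s}>|B|/2$.

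The plan is a double count in the spirit of the classical K\H{o}vari--S\'os--Tur\'an argument. We count the set $\mathcal P$ of pairs $(X,y)$, where $X=(x_1,\ldots,x_s)\in(\F^n)^s$ is affinely independent and $y\in\F^n$ satisfies $x_i+y\in B$ for all $i$, that is, $y\in N_B(X)$.

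\emph{Upper bound.} By Corollary~\ref{cor:common-bound}, each affinely independent $X$ has $|N_B(X)|\leq q^{s+t-2}$. At first it seems we should sum over all $X\in(\F^n)^s$, which would give a useless $q^{sn}$ factor. The remedy is to normalize: the property $y\in N_B(X)$ depends only on the translates $x_i+y$. So reparametrize by $b_i=x_i+y\in B$. A pair $(X,y)$ is the same as a tuple $(b_1,\ldots,b_s)\in B^s$ together with a $y\in\F^n$, and $X$ is affinely independent iff $(b_1,\ldots,b_s)$ is, since affine independence is translation invariant. This overcounts by the free $y$. To fix this, restrict to $X$ with $x_1=0$. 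Then $y=b_1$ is determined, so $\mathcal P_0=\{(X,y)\in\mathcal P: x_1=0\}$ is in bijection with the affinely independent tuples in $B^s$. Corollary~\ref{cor:independent-tuples} then gives
\[
|\mathcal P_0|\geq(|B|-d_{q,s})|B|^{s-1}>\tfrac12|B|^s .
\]
On the other side, the number of affinely independent $X$ with $x_1=0$ is at most $q^{(s-1)n}$, and each contributes at most $q^{s+t-2}$ choices of $y$. Hence
\[
|\mathcal P_0|\leq q^{(s-1)n}q^{s+t-2}.
\]
Combining the two bounds gives $|B|^s<2q^{s+t-2}q^{(s-1)n}$. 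Taking $s$-th roots and dividing by $q-1$ yields the second term of the maximum.

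\emph{Main obstacle.} The one point to check carefully is this normalization. Corollary~\ref{cor:common-bound} is stated for arbitrary affinely independent $X$ and uses only that $x_1+N_B(X)$ projects into $\F^n/W_X$, so it applies to $X$ with $x_1=0$. Translation invariance of affine independence is what makes the bijection with affinely independent tuples of $B^s$ valid. Everything else is bookkeeping with the case split $|B|\leq 2d_{q,s}$ versus $|B|>2d_{q,s}$, which produces exactly the maximum in the statement.
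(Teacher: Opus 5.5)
Your proof is correct and is essentially the paper's double-counting argument. The paper counts all pairs $(X,y)$ with $X$ affinely independent and uses the bijection $(I,y)\mapsto(I-y,y)$ to show that there are exactly $q^n|\mathcal I|$ of them, so the extra $q^n$ in $|\mathcal X|\le q^{sn}$ cancels; you instead remove the translations at the start by fixing $x_1=0$, which gives the same inequality $|\mathcal I|\le q^{s+t-2}q^{(s-1)n}$. This also means that summing over all $X$ is not useless, as your aside suggests; it only requires counting the other side as $q^n|\mathcal I|$.
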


\begin{proof}
Let $M$ be a rank-$n$ simple $\GF(q)$-representable matroid with ground set $A$ and with no $M(K_{s,t})$-restriction.  Let $B=\qexp(A)$. Since $M$ is simple, $|B|=(q-1)|E(M)|$. If $|B|\leq 2d_{q,s}$, then $|E(M)|\leq \frac{2d_{q,s}}{q-1}$ and the conclusion is immediate. So suppose that $|B|>2d_{q,s}$.  We double count the pairs $(X,y)$ such that $X$ is an affinely independent $s$-tuple $(x_1,x_2,\ldots,x_s)$ in $(\F^n)^s$ and
\[
    y\in N_B(X).
\]
Let $m$ be the number of such pairs.  Let $\mathcal X$ be the collection of affinely independent $s$-tuples in $(\F^n)^s$. Then
\begin{align}
    m=\sum_{X\in\mathcal X}|N_B(X)|.\label{equ:RHS}
\end{align}

On the other hand, let $\mathcal I$ be the collection of affinely independent $s$-tuples in $B^s$. Since translation preserves affine independence, the map
\[
    (I,y)\longmapsto(I-y,y)
\]
is a bijection from $\mathcal I\times\F^n$ to the set of pairs counted by $m$, where
\[
    I-y=(b_1-y,b_2-y,\ldots,b_s-y)
\]
when $I=(b_1,b_2,\ldots,b_s)$. Moreover, $y\in N_B(I-y)$ because $(b_i-y)+y=b_i\in B$ for each $i$.  Conversely, if $(X,y)$ is a pair counted by $m$, then $X+y\in\mathcal I$.  Therefore,
\begin{align}
    m=q^n|\mathcal I|.\label{equ:LHS}
\end{align}
Combining equations~\eqref{equ:RHS} and~\eqref{equ:LHS}, we obtain
\[
    q^n|\mathcal I|=\sum_{X\in\mathcal X}|N_B(X)|.
\]
By Corollary~\ref{cor:common-bound}, we have $|N_B(X)|\leq q^{s+t-2}$ for every $X\in\mathcal X$.  Hence
\[
    q^n|\mathcal I|\leq q^{s+t-2}|\mathcal X|.
\]
Using the trivial bound $|\mathcal{X}|\leq (q^n)^s$, we get $q^n|\mathcal I|\leq q^{s+t-2}q^{sn}$, so
\[
    |\mathcal I|\leq q^{s+t-2}q^{(s-1)n}.
\]
By Corollary~\ref{cor:independent-tuples},
\[
    (|B|-d_{q,s})|B|^{s-1}
      \leq q^{s+t-2}q^{(s-1)n}.
\]
Since, by assumption, $|B|>2d_{q,s}$, we have
\[
    \frac{|B|^s}{2}
      \leq q^{s+t-2}q^{(s-1)n}.
\]
Thus
\[
    |B|\leq(2q^{s+t-2})^{1/s}q^{(1-1/s)n}.
\]
Finally, $|B|=(q-1)|E(M)|$, and hence Theorem~\ref{thm:detailed-upper} holds.
\end{proof}

Since $(2d_{q,s})/(q-1)$ and $(2q^{s+t-2})^{1/s}/(q-1)$ depend only on $q,s,t$, we conclude that
\[
    \exq(M(K_{s,t});n)=O_{q,s,t}\bigl(q^{(1-1/s)n}\bigr),
\]
as stated in Theorem~\ref{thm:main-upper}.

\section{Lower bound and binary Sidon sets}\label{sec:sidon}

Constructing lower bounds that match Theorem~\ref{thm:detailed-upper} in full generality appears difficult. In this section, we focus on the special case when $q=s=2$.  In number theory, a {\it Sidon sequence} is a sequence $a_1<a_2<\cdots$ of positive integers such that all sums $a_i+a_j$ with $i\leq j$ are distinct.  This notion was generalized to arbitrary groups by Babai and S\'os~\cite{BabaiSos}, who called the resulting objects Sidon sets.  A {\it binary Sidon set} is a subset $S$ of $\F_2^n$ such that, for all distinct $a,b,c,d\in S$, we have
\[
    a+b\neq c+d.
\]
Equivalently, no four distinct elements of $S$ sum to zero.

\begin{proposition}\label{prop:sidon-equivalence}
Let $S\subseteq\F_2^n-\{0\}$.  The vector matroid on $S$ has no $M(K_{2,2})$-restriction if and only if $S$ is a binary Sidon set.
\end{proposition}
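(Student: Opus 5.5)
We prove the two directions separately. The whole argument rests on identifying which $4$-element subsets of $\F_2^n$ give a copy of $M(K_{2,2})$. Since $K_{2,2}$ is a $4$-cycle, $M(K_{2,2})\cong U_{3,4}$, the $4$-element circuit. So the vector matroid on $S$ has an $M(K_{2,2})$-restriction exactly when some $4$-element subset $\{a,b,c,d\}\subseteq S$ is a circuit.

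First we record the binary fact that a $4$-element set $\{a,b,c,d\}$ of nonzero vectors is a circuit if and only if $a+b+c+d=0$ and no proper subset is dependent. Over $\GF(2)$ the only possible linear relations are subset sums equal to zero. Because $S$ is simple (distinct nonzero vectors), any $3$-element subset is dependent only when it sums to zero, and smaller subsets are automatically independent.

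\emph{($\Leftarrow$)} Suppose $S$ is a Sidon set, and suppose for contradiction that $\{a,b,c,d\}\subseteq S$ is a circuit. Then $a+b+c+d=0$, since the only nonzero scalar is $1$ and a circuit has a relation with full support. This gives $a+b=c+d$ with $a,b,c,d$ distinct, contradicting the Sidon property.

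\emph{($\Rightarrow$)} Suppose $S$ is not Sidon, so there are distinct $a,b,c,d\in S$ with $a+b+c+d=0$. We claim $\{a,b,c,d\}$ is a circuit, which gives a $U_{3,4}\cong M(K_{2,2})$ restriction. It is dependent. To see it is minimally dependent, we check that no proper subset sums to zero:
\begin{itemize}
\item a singleton cannot, since $0\notin S$;
\item a pair cannot, since the vectors are distinct;
\item a triple, say $a+b+c=0$, would force $d=0$, contradicting $0\notin S$.
\end{itemize}
Hence every $3$-subset is independent and the set is a circuit.

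The only delicate point is the triple case, and the hypothesis $S\subseteq\F_2^n-\{0\}$ handles it. Alternatively, for the forward direction one could invoke Lemma~\ref{lem:grid} with $s=t=2$, taking $p=a$, $v_2=b+a$, $u_2=c+a$. Then $p+v_2+u_2=a+b+c=d$, and the linear independence of $\{p,v_2,u_2\}$ is exactly the triple check above.
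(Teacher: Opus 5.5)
Your proof is correct and follows the paper's argument: $M(K_{2,2})=M(C_4)\cong U_{3,4}$ is a four-element circuit, and four distinct nonzero vectors of $\F_2^n$ form a circuit exactly when they sum to zero. You fill in details the paper leaves implicit, notably that $0\notin S$ rules out a dependent triple. The alternative route through Lemma~\ref{lem:grid} is also valid but not needed.
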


\begin{proof}
Since $M(K_{2,2})=M(C_4)$, an $M(K_{2,2})$-restriction is a four-element circuit.  Four distinct nonzero vectors in $\F_2^n$ form such a circuit if and only if their sum is zero.
\end{proof}

The condition $0\notin S$ is harmless for our purposes.  If a binary Sidon set $S$ contains zero, then, for every $b\in\F_2^n-S$, the translate $S+b$ is a binary Sidon set that avoids zero.  Moreover, if $S$ does not span $\F_2^n$, the vector matroid on $S$ may be replaced by its direct sum with $n-r(S)$ copies of $U_{1,1}$. The resulting matroid is a rank-$n$ simple binary matroid with at least $|S|$ elements and no $M(K_{2,2})$-restriction. Therefore, determining $\operatorname{ex}_2(M(K_{2,2});n)$ is essentially equivalent to determining the maximum size of a binary Sidon set in $\F_2^n$. The following upper bound and constructions are recorded in~\cite[Proposition~1.2 and Remark~1.3]{CzerwinskiPott}.

\begin{proposition}\label{prop:sidon-upper}
If $n\geq6$ and $S$ is a binary Sidon set in $\F_2^n$, then
\[
|S|\leq
\begin{cases}
2^{(n+1)/2}-2, & \text{if $n$ is odd},\\
\lfloor2^{(n+1)/2}+0.5\rfloor, & \text{if $n$ is even}.
\end{cases}
\]
\end{proposition}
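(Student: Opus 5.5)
The plan is a double count of differences. For a binary Sidon set $S\subseteq\F_2^n$ of size $k$, consider the $\binom{k}{2}$ sums $a+b$ with $\{a,b\}$ ranging over unordered pairs of distinct elements of $S$. The Sidon condition forbids $a+b=c+d$ whenever $a,b,c,d$ are distinct. If $\{a,b\}\neq\{c,d\}$ share an element, say $a=c$, then $a+b=a+d$ forces $b=d$, a contradiction. Hence all these sums are distinct. They are also nonzero, because $a\neq b$. This gives $\binom{k}{2}\le 2^n-1$, so $k(k-1)\le 2^{n+1}-2$, and therefore $k\le \tfrac12+\sqrt{2^{n+1}-7/4}<2^{(n+1)/2}+\tfrac12$. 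This already yields the even case. There $2^{(n+1)/2}$ is irrational, so $k\le\lfloor 2^{(n+1)/2}+0.5\rfloor$ follows directly, since $k$ is an integer strictly below $2^{(n+1)/2}+\tfrac12$. I would verify that the inequality $k(k-1)\le 2^{n+1}-2$ really does not allow $k=\lfloor 2^{(n+1)/2}+0.5\rfloor+1$. This is routine, because $(x+\tfrac12)^2-\tfrac14=x^2+x>x^2$ when $x=2^{(n+1)/2}$.

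For $n$ odd, $2^{(n+1)/2}=2^m$ with $m=(n+1)/2$ an integer. The counting bound gives only $k(k-1)\le 2^{2m}-2$, hence $k\le 2^m$ (since $2^m(2^m+1)>2^{2m}-2$). The claimed bound $2^m-2$ therefore needs a refinement that excludes $k=2^m$ and $k=2^m-1$. The natural tool is the characterisation via the almost-perfect-nonlinear/character-sum viewpoint. Let $f=\mathbf 1_S$ and write $\widehat f(u)=\sum_{x\in S}(-1)^{u\cdot x}$. The number of solutions to $a+b=c+d$ with $a,b,c,d\in S$ is $2^{-n}\sum_u \widehat f(u)^4$. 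The Sidon condition says this equals $2k^2-k$, the trivial solutions. Separating the $u=0$ term $k^4$ and using Parseval, $\sum_u\widehat f(u)^2=2^nk$, I would derive $\sum_{u\ne0}\widehat f(u)^4=2^n(2k^2-k)-k^4$. Cauchy--Schwarz then gives $\sum_{u\neq0}\widehat f(u)^4\ge (2^nk-k^2)^2/(2^n-1)$. Combining the two yields a polynomial inequality in $k$. I expect this to be slightly sharper than the pigeonhole bound, and to rule out $k$ near $2^m$ when $n=2m-1$.

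The main obstacle is the odd case: squeezing out exactly the ``$-2$''. If the Fourier inequality alone gives only $k\le 2^m-1$, I would add parity/integrality arguments. Each $\widehat f(u)\equiv k\pmod 2$. Moreover, equality in Cauchy--Schwarz would force $|\widehat f(u)|$ constant on $u\neq0$, and I would show this is impossible when $n$ is odd, because $2^n$ is then not a perfect square, contradicting integrality. That argument would finish near-extremal cases one at a time. Since the paper cites~\cite{CzerwinskiPott} for this statement, a fallback is to defer the exact odd constant to that reference and prove only the counting bound $|S|<2^{(n+1)/2}+\tfrac12$ in full. That bound already suffices for the $\Theta(2^{n/2})$ application in Theorem~\ref{thm:binary-sidon}.
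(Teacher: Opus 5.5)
The paper gives no proof of this proposition. It quotes the bound from Czerwinski and Pott, and its own later use in Corollary~\ref{cor:k22} needs only the counting bound.

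Your even-$n$ argument is correct and complete. Distinct pairs in a binary Sidon set have distinct nonzero sums, so $\binom{|S|}{2}\le 2^n-1$, which gives $|S|<2^{(n+1)/2}+\tfrac12$ and hence the stated floor.

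The odd case, however, has a genuine gap, and the Fourier route you describe cannot close it.

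\textbf{The energy count is wrong.} In characteristic $2$, every quadruple $(a,a,c,c)$ solves $a+b=c+d$. So a Sidon set of size $k$ has $3k^2-2k$ solutions, not $2k^2-k$. With $2k^2-k$ your computation would give $k(k-1)\le 2^n-1$. That already fails for the Sidon set $\{0,e_1,e_2,e_3\}\subseteq\F_2^3$, where the $e_i$ are standard basis vectors.

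\textbf{The corrected moments give nothing new.} With the correct count, your Cauchy--Schwarz inequality simplifies exactly to $(k-1)(2^{n+1}-2-k^2+k)\ge 0$. That is just the pigeonhole bound $k(k-1)\le 2^{n+1}-2$ again. This is unavoidable: $\widehat f(u)^2$ is the Fourier transform of the representation function of $S+S$, so these moments encode only the pairwise sums that pigeonhole already uses.

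\textbf{The integrality remark misses the cases that matter.} It concerns only the Cauchy--Schwarz equality case $k(k-1)=2^{n+1}-2$. For odd $n\ge 3$ that case is impossible anyway, because $2^{n+3}-7$ is not a square. It says nothing about $k=2^{(n+1)/2}$ or $k=2^{(n+1)/2}-1$, where the counting inequality is strict.

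More fundamentally, your sketch never uses the hypothesis $n\ge 6$, and the odd-case bound is false without it. The set $\{0,e_1,e_2,e_3\}$ is Sidon of size $4=2^{(3+1)/2}$ in $\F_2^3$. The set $\{0,e_1,\ldots,e_5,e_1+e_2+\cdots+e_5\}$ is Sidon of size $7=2^{(5+1)/2}-1$ in $\F_2^5$: its $21$ pairwise sums have weights $1$, $2$, $4$ or $5$ and are visibly distinct.

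The facts your sketch relies on hold equally for $n=3$ and $n=5$: the moment identities, $\widehat f(u)\equiv k\pmod 2$, and $2^n$ not being a square. So an argument built from them alone would also exclude these two examples.

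Since subsets of Sidon sets are Sidon, it suffices to exclude $|S|=2^{(n+1)/2}-1$. Whatever does this must use $n\ge 7$ in an essential way and must see more than pairwise sums. For instance, translate so that $0\in S$. Then the nonzero elements of $S$ are the parity-check columns of a binary linear code of length $|S|-1$, codimension at most $n$, and minimum distance at least $5$, and one needs nonexistence results for such codes.

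As written, your proposal proves the even case, but for odd $n$ only $|S|\le 2^{(n+1)/2}$.
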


\begin{proposition}\label{prop:sidon-lower}
For $n\geq6$, there are binary Sidon sets in $\F_2^n$ of size
\[
\begin{cases}
2^k+2, & \text{if $n=2k$ and $k$ is even},\\
2^k+1, & \text{if $n=2k$ and $k$ is odd},\\
2^{k-1}+2^{k/2}, & \text{if $n=2k-1$ and $k$ is even},\\
2^{k-1}+2^{(k-1)/2}, & \text{if $n=2k-1$ and $k$ is odd}.
\end{cases}
\]
\end{proposition}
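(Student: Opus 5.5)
This statement is quoted from Czerwinski and Pott, so one option is simply to cite it. If I were to prove it, I would use explicit algebraic constructions in $\F_{2^k}$, identified with $\F_2^k$, and in $\F_{2^k}\times\F_{2^k}\cong\F_2^{2k}$.

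\emph{Even $n=2k$.} I would start from the standard APN construction $S_0=\{(x,x^3):x\in\F_{2^k}\}\subseteq\F_2^{2k}$, which has size $2^k$. Suppose $(a,a^3)+(b,b^3)+(c,c^3)+(d,d^3)=0$ with $a,b,c,d$ distinct. Then $a+b=c+d=u\neq0$. Writing $b=a+u$ gives $a^3+b^3=u(a^2+au)+u^3$, and similarly for $c$, so equating the cube sums yields $a^2+au=c^2+cu$. This forces $(a+c)(a+c+u)=0$, hence $c\in\{a,b\}$, a contradiction. So $S_0$ is Sidon.

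To reach $2^k+1$ or $2^k+2$, I would adjoin one or two extra points $(0,w)$ with $w\ne0$. I need $w$ never to equal a sum of three distinct elements of $S_0$. For one extra point, the condition $(a+b+c, a^3+b^3+c^3)=(0,w)$ reduces, with $c=a+b$, to $w=a^3+b^3+(a+b)^3=ab(a+b)$. So I must choose $w$ outside the image of $(a,b)\mapsto ab(a+b)$ over distinct nonzero $a,b$ with $a+b\ne0$. Substituting $b=ta$ gives the image $\{a^3t(1+t)\}$. When $k$ is odd, cubing is a bijection, so this image is everything, and that route fails. The extra points therefore have to depend on parity. For $k$ even, the cubes form an index-$3$ subgroup, and I would pick $w$ outside the relevant cosets. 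For the second extra point I would also check sums involving two extra points.

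\emph{Odd $n=2k-1$.} I would take a Sidon set in $\F_2^{2k}$ lying in a hyperplane, or intersect a Sidon set with a hyperplane of maximum density. Averaging over the two halves of $\F_2^{2k}$ gives roughly half the size, namely about $2^{k-1}$. The refined counts $2^{k-1}+2^{k/2}$ and $2^{k-1}+2^{(k-1)/2}$ would come from Weil-type estimates for the trace of $x^3$, i.e.\ Gold function Walsh values. Concretely, I would choose the hyperplane $\{(x,y):\mathrm{Tr}(\lambda y+\mu x)=0\}$ and count its points on the graph $y=x^3$ using the known Walsh spectrum $\{0,\pm2^{(k+1)/2}\}$ for odd $k$, and $\{0,\pm2^{k/2},\pm2^{k/2+1}\}$ for even $k$. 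Taking the maximum-spectrum hyperplane gives size $2^{k-1}+\tfrac12\max|W|$, which matches the stated formulas, since $2^{(k+1)/2}/2=2^{(k-1)/2}$ and $2^{k/2+1}/2=2^{k/2}$. A subset of a Sidon set is Sidon, and the hyperplane is a copy of $\F_2^{2k-1}$, so this settles the odd case.

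\emph{Main obstacle.} The hardest part is the even case with the $+1$ or $+2$ extra points, which needs a careful parity-dependent choice. There I would rely on the cited construction rather than reprove it.
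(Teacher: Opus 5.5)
Citing is exactly what the paper does: it quotes the statement from Czerwinski and Pott and gives no argument. Your odd-$n$ argument is correct and goes beyond the paper. For $\lambda\neq0$, the graph $\{(x,x^3)\}$ meets $\{\mathrm{Tr}(\mu x+\lambda y)=0\}$ in $2^{k-1}+W_\lambda(\mu)/2$ points, where $W_\lambda(\mu)=\sum_x(-1)^{\mathrm{Tr}(\lambda x^3+\mu x)}$. The Gold spectrum gives $\max|W|=2^{(k+1)/2}$ for $k$ odd and $2^{k/2+1}$ for $k$ even. The value $+\max|W|$ is actually attained, since $\sum_\mu W_\lambda(\mu)=2^k>0$. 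Finally, a subset of a Sidon set lying in a hyperplane is a Sidon set in $\F_2^{2k-1}$.

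\emph{The even-$n$ plan, however, fails for both parities, not only for $k$ odd.} A nonzero $w$ equals $a^3t(1+t)$ with $a\neq0$ and $t\notin\{0,1\}$ if and only if $\mathrm{Tr}(wx^3)=0$ for some $x\neq0$ (take $x=1/a$ and $t^2+t=wx^3$). The Walsh bound you quote gives at least $2^{k-1}-\tfrac12\max|W|\geq2$ solutions $x$ for $k\geq3$, one of which is $x=0$. So when $k$ is even there is no $w$ ``outside the relevant cosets''. Concretely, in $\F_{16}$ the elements $1,\omega,\omega^2$ (with $\omega^2+\omega+1=0$) all have trace $0$ and lie in the three different cube cosets.

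Other extra points do not help either. The map $(x,y)\mapsto(x+v,\,y+vx^2+v^2x+v^3)$ is an affine bijection fixing the graph and preserving sums of three points, and it sends $(v,w)$ to $(0,w+v^3)$. Hence $\{(x,x^3)\}$ is a complete Sidon set; equivalently, the double-error-correcting BCH code has covering radius $3$. The even case therefore needs a different base set.

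The missing idea is the unit circle $U=\{x\in\F_{2^{2k}}:x^{2^k+1}=1\}$, of size $2^k+1$. On $U$ the field automorphism $x\mapsto x^{2^k}$ is inversion. So $x_1+x_2=x_3+x_4=s\neq0$ gives $s/(x_1x_2)=s/(x_3x_4)$. Both pairs are then the root set of $X^2+sX+x_1x_2$, so they coincide, and $U$ is Sidon.

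If $k$ is even, $U\cup\{0\}$ is also Sidon. Suppose $x_1+x_2+x_3=0$ with distinct $x_i\in U$. Applying the automorphism gives $\sum x_i^{-1}=0$, so the first two elementary symmetric functions of the $x_i$ vanish. Hence $x_1^3=x_2^3=x_3^3$, and $x_2/x_1\in U$ has order $3$. This is impossible because $3\nmid 2^k+1$ when $k$ is even. These sets (the Zetterberg-code construction) give the required sizes $2^k+1$ and $2^k+2$.
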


Using Propositions~\ref{prop:sidon-upper} and~\ref{prop:sidon-lower}, we obtain the following.

\begin{corollary}\label{cor:k22}
For $n\geq6$,
\[
    2^{(n-1)/2}+1
      \leq\operatorname{ex}_2(M(K_{2,2});n)
      \leq2^{(n+1)/2}+0.5.
\]
\end{corollary}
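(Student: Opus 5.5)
The plan is to derive both inequalities of Corollary~\ref{cor:k22} from Propositions~\ref{prop:sidon-equivalence}, \ref{prop:sidon-upper}, and~\ref{prop:sidon-lower}. By Proposition~\ref{prop:sidon-equivalence}, a rank-$n$ simple binary matroid with no $M(K_{2,2})$-restriction is exactly the vector matroid on a spanning binary Sidon set $S\subseteq\F_2^n-\{0\}$.

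\textbf{Upper bound.} Take an extremal matroid $M$ and represent it as such a set $S$. Proposition~\ref{prop:sidon-upper} then gives $|S|\le 2^{(n+1)/2}-2$ when $n$ is odd, and $|S|\le\lfloor 2^{(n+1)/2}+0.5\rfloor\le 2^{(n+1)/2}+0.5$ when $n$ is even. Both are at most $2^{(n+1)/2}+0.5$.

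\textbf{Lower bound.} Take the Sidon set from Proposition~\ref{prop:sidon-lower}. If it contains $0$, replace it by a translate $S+b$ with $b\notin S$; translation preserves the Sidon property and the size. If it does not span $\F_2^n$, take the direct sum of its vector matroid with $n-r(S)$ copies of $U_{1,1}$, as described in the paragraph after Proposition~\ref{prop:sidon-equivalence}. One must check that this adds no $4$-element circuit. A circuit of a direct sum lies in a single component, and the coloops lie in no circuit. So the result is a rank-$n$ simple binary matroid with no $M(K_{2,2})$-restriction and at least $|S|$ elements.

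It remains to show $|S|\ge 2^{(n-1)/2}+1$ in each case of Proposition~\ref{prop:sidon-lower}:
\begin{itemize}
\item If $n=2k$, then $|S|\ge 2^k+1=2^{n/2}+1\ge 2^{(n-1)/2}+1$.
\item If $n=2k-1$, then $|S|\ge 2^{k-1}+2^{(k-1)/2}$. Here $2^{k-1}=2^{(n-1)/2}$, and $2^{(k-1)/2}\ge 1$ since $k\ge 1$. The case $k$ even gives $2^{k-1}+2^{k/2}$, which is even larger.
\end{itemize}

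The only point needing care is the direct-sum step, which relies on the standard fact that circuits of a direct sum lie in one component. Everything else is direct comparison of the stated bounds.
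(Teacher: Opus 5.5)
Your proposal is correct and follows the paper's route: the upper bound comes from Proposition~\ref{prop:sidon-upper} via the Sidon equivalence, and the lower bound from Proposition~\ref{prop:sidon-lower} together with the translation and coloop-padding reductions stated after Proposition~\ref{prop:sidon-equivalence}. Your check that the added coloops create no $4$-element circuit, and your case-by-case size comparison, fill in details the paper leaves implicit.
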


For $t\geq2$, every $M(K_{2,t})$-restriction contains an $M(K_{2,2})$-restriction.  Hence
\[
    \operatorname{ex}_2(M(K_{2,2});n)
      \leq\operatorname{ex}_2(M(K_{2,t});n).
\]
Combining Theorem~\ref{thm:detailed-upper} with Corollary~\ref{cor:k22}, we conclude with the following more precise restatement of Theorem~\ref{thm:binary-sidon}.
\begin{corollary}
For $t\geq 2$ and $n\geq6$,
\[
    2^{(n-1)/2}+1
      \leq\operatorname{ex}_2(M(K_{2,t});n)
      \leq2^{(t+1)/2}2^{n/2}.
\]
\end{corollary}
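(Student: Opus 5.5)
The plan is to specialize Theorem~\ref{thm:detailed-upper} to $q=s=2$ for the upper bound, and to use Corollary~\ref{cor:k22} together with monotonicity in $t$ for the lower bound.

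For the lower bound, I first justify $\operatorname{ex}_2(M(K_{2,2});n)\leq\operatorname{ex}_2(M(K_{2,t});n)$. The graph $K_{2,t}$ with $t\geq2$ contains $K_{2,2}=C_4$ as a subgraph. Restricting $M(K_{2,t})$ to those four edges therefore gives an $M(K_{2,2})$-restriction. Hence any matroid with an $M(K_{2,t})$-restriction also has an $M(K_{2,2})$-restriction. Equivalently, a simple rank-$n$ binary matroid with no $M(K_{2,2})$-restriction has no $M(K_{2,t})$-restriction. Taking such a matroid of maximum size, Corollary~\ref{cor:k22} gives
\[
\operatorname{ex}_2(M(K_{2,t});n)\geq\operatorname{ex}_2(M(K_{2,2});n)\geq 2^{(n-1)/2}+1 \qquad\text{for } n\geq6.
\]
(That corollary in turn rests on Proposition~\ref{prop:sidon-lower}, Proposition~\ref{prop:sidon-equivalence} and the padding remark, which already handle avoiding $0$ and reaching full rank.)

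For the upper bound, I apply Theorem~\ref{thm:detailed-upper} with $q=2$ and $s=2$, so $q-1=1$. Then $d_{2,2}=(2^{1}-1)/(2-1)=1$, so the first term in the maximum is $2$. The second term is
\[
(2\cdot 2^{2+t-2})^{1/2}\,2^{(1-1/2)n}=2^{(t+1)/2}\,2^{n/2}.
\]
Since $t\geq2$ and $n\geq0$, this is at least $2^{3/2}>2$, so the maximum equals the second term. This gives $\operatorname{ex}_2(M(K_{2,t});n)\leq 2^{(t+1)/2}2^{n/2}$.

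No step is a real obstacle. The only points needing care are the arithmetic confirming that the second term dominates the maximum, and stating the subgraph-to-restriction monotonicity explicitly. Combining the two bounds also yields Theorem~\ref{thm:binary-sidon}, since both bounds are $\Theta_t(2^{n/2})$.
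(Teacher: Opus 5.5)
Your proposal is correct and matches the paper's argument: the lower bound comes from the monotonicity $\operatorname{ex}_2(M(K_{2,2});n)\leq\operatorname{ex}_2(M(K_{2,t});n)$ together with Corollary~\ref{cor:k22}, and the upper bound from Theorem~\ref{thm:detailed-upper} at $q=s=2$. Your explicit checks that $d_{2,2}=1$ and that the second term of the maximum dominates fill in arithmetic the paper leaves implicit.
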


\section*{Acknowledgments}

The author thanks James Oxley for many helpful discussions and comments.


\begin{thebibliography}{99}

\bibitem{BabaiSos}
L.~Babai and V.~T.~S\'os,
Sidon sets in groups and induced subgraphs of Cayley graphs,
{\it European J. Combin.} \textbf{6}~(1985), 101--114.

\bibitem{Bonin_Qin}
J.~Bonin and H.~Qin,
Size functions of subgeometry-closed classes of representable combinatorial geometries,
{\it Discrete Math.} \textbf{224}~(2000), 37--60.

\bibitem{BB}
R.~C.~Bose and R.~C.~Burton,
A characterization of flat spaces in a finite geometry and the uniqueness of the Hamming and the MacDonald codes,
{\it J. Combin. Theory} \textbf{1}~(1966), 96--104.

\bibitem{CzerwinskiPott}
I.~Czerwinski and A.~Pott,
On large Sidon sets,
{\it J. Combin. Theory Ser. A} \textbf{220}~(2026), Paper 106129.

\bibitem{Erdos_Stone}
P.~Erd\H{o}s and A.~H.~Stone,
On the structure of linear graphs,
{\it Bull. Amer. Math. Soc.} \textbf{52}~(1946), 1087--1091.

\bibitem{GeelenNelson}
J.~Geelen and P.~Nelson,
An analogue of the Erd\H{o}s--Stone theorem for finite geometries,
{\it Combinatorica} \textbf{35}~(2015), 209--214.

\bibitem{MU_HJ}
J.~Geelen,
The Geometric Density Hales--Jewett Theorem,
\url{https://matroidunion.org/?p=601}.

\bibitem{KST}
T.~K\H{o}vari, V.~T.~S\'os, and P.~Tur\'an,
On a problem of K.~Zarankiewicz,
{\it Colloq. Math.} \textbf{3}~(1954), 50--57.

\bibitem{LiuLuoNelsonNomoto}
H. Liu, S. Luo, P. Nelson, and K. Nomoto,
Stability and exact Turán numbers for matroids,
{\it J. Combin. Theory Ser. B} \textbf{143}~(2020), 29--41.

\bibitem{Oxley}
J.~Oxley, 
{\it Matroid Theory}, Second edition, 
Oxford University Press, New York, 2011.

\bibitem{Turan41}
P.~Tur\'an,
Eine Extremalaufgabe aus der Graphentheorie,
{\it Mat. Fiz. Lapok} \textbf{48}~(1941), 436--452.

\end{thebibliography}
\end{document}